\theoremstyle{plain}
\newtheorem{theorem}{Theorem}[section]
\newtheorem{prop}[theorem]{Proposition}
\newtheorem{lemma}[theorem]{Lemma}
\theoremstyle{definition}
\newtheorem{exmp}[theorem]{Example}
\theoremstyle{remark}
\newtheorem{rem}[theorem]{Remark}
\DeclareMathOperator{\lcm}{lcm}
\DeclareMathOperator{\pgl}{PGL}
\DeclareMathOperator{\gal}{Gal}
\DeclareMathOperator{\GL}{\mathrm{GL}}
\DeclareMathOperator{\PGL}{\mathrm{PGL}}
\newcommand{\Q}{\mathbb{Q}}
\newcommand{\Qbar}{\overline{\mathbb{Q}}}
\renewcommand{\P}{\mathbb{P}}
\title{The field of iterates of a rational function}
\author{Francesco Veneziano}
\address[Francesco Veneziano]{Department of mathematics, University of Genova, Via Dodecaneso~35, 16146 Ge\-no\-va, Italy}
\email{veneziano@dima.unige.it}
\author{Solomon Vishkautsan}
\address[Solomon Vishkautsan]{Department of Computer Science, Tel-Hai Academic College, Upper Galilee 9977, Qiryat Shemona 1220800, Israel}
\email{wishcow@gmail.com}
\subjclass[2010]{37P05, 11C08, 37P15}
\keywords{field of definition, iterates of rational functions, arithmetic dynamics}
\begin{document}

\begin{abstract}
We study how the field of definition of a rational function changes under iteration. We provide a complete classification of polynomials with the property that the field of definition of one of their iterates drops in degree (over a given base field). 
We show with families of examples that this characterization does not hold for rational functions. 
Finally, we also classify fractional linear transformations with this property. 
\end{abstract}

\maketitle

\section{Introduction}
Let $K$ be a number field, and $\bar{K}$ its algebraic closure.
A rational function $f(x)$ defined over $\bar{K}$ is a quotient $p(x)/q(x)$ of two polynomials $p(x),q(x)$ with $q\neq 0$ in $\bar{K}[x]$; the set of rational functions over $\bar{K}$ is denoted by $\bar{K}(x)$. If $f(x)=p(x)/q(x)$ is a non-zero rational function and $p(x),q(x)$ are coprime polynomials, the degree of $f$ is defined as $\deg(f(x))=\max(\deg(p(x)),\deg(q(x))$. A non-zero rational function can be represented in a unique way as a ratio of coprime polynomials $p(x)/q(x)$ with $q(x)$ monic.

Let $f(x)=\frac{p(x)}{q(x)}\in{\bar{K}(x)^*}$, with $p(x)=\sum_{i=0}^{n}a_i x^i, q(x)=\sum_{i=0}^{m}b_i x^i, b_m=1$ and $p(x),q(x)$ coprime; we denote by $K(f)$ the field $K(a_0,\dotsc, a_n,b_0,\dotsc, b_m)$ defined by adjoining to $K$ all the coefficients of $p(x)$ and $q(x)$. This is the \emph{field of definition} of $f$ (over $K$); it can also be characterized as the fixed field of the subgroup of all elements in $\gal(\bar{K}/K)$ which act trivially on $f$. 

We denote by $f^{\circ{n}}$ the $n$-th iterate of $f$ under composition; i.e., $f^{\circ{1}}=f$ and  $f^{\circ{n}}=f^{\circ{(n-1)}}\circ{f}$ for any integer $n\ge{2}$. When $\deg f\geq 1$ we can consider $f$ as a dynamical system on $\P_1(\bar{K})$, which we identify with $\bar{K}\cup\{\infty\}$, and we can study the orbits of points $P\in{\P_1(\bar{K})}$ under $f$, i.e., the sequence of iterates
\[P,f(P),\ldots,f^{\circ{n}}(P),\ldots\,.\] 

Two rational functions $f$ and $g$ in $\bar{K}(x)$ are called $\bar{K}$-\emph{linearly conjugate} if there exists a fractional linear transformation $\ell(x)=(ax+b)/(cx+d) \in{}\pgl_2(\bar{K})$ such that $g = \ell^{-1}\circ{f}\circ{\ell}$. We will use the notation \(f^\ell = \ell^{-1}\circ{f}\circ{\ell}\)
for the action of conjugating $f$ by $\ell$.
Two linearly conjugate rational functions have essentially the same dynamical properties over $\bar{K}$. For instance, a point $P\in{\bar{K}}$ is periodic for $f$ (i.e., there exists an integer $n\geq{1}$ such that $f^{\circ{n}}(P)=P$) if and only if $\ell^{-1}(P)$ is periodic for $f^\ell$. We will denote by $[f]$ the conjugacy class of $f$ under elements of $\pgl_2(\bar{K})$, and by $[f]_K$ the conjugacy class of $f$ under elements of $\pgl_2(K)$.

It may be the case that the field of definition of a function $f$ changes under conjugation; for example, the rational function $f=\sqrt{2}x^2$ is conjugate to $g=2x^2$ under $\ell=\sqrt{2}x$. To capture the idea of a minimal field of definition up to conjugation, Silverman in \cite{silverman1995} defines the \emph{field of moduli}  of $f\in{\bar{K}}(x)$ (over $K$) as the fixed field in $\bar{K}$ of the group
\[G_f = \{\sigma\in{\gal(\bar{K}/K)} : [f^\sigma]=[f]\}.\]
Clearly, the field of moduli is contained in the field of definition of any function in $[f]$.
Silverman proved that there exists a function in $[f]$ whose field of definition is equal to the field of moduli of $f$ if $\deg(f)$ is even, or $f$ is a polynomial map. More recently, the problem of comparing field of moduli vs.\ field of definition has been studied in higher dimension by Doyle and Silverman \cite{doyle-silverman2019}, and by Hutz and Manes \cite{hutz-manes2013}.

In this article we consider the fields of definition of the iterates of $f\in{\bar{K}(x)}$. It may be that the field of definition of some iterate $f^{\circ{n}}$ is strictly contained in the field of definition of $f$. For example, we can take $f(x)=\sqrt[3]{2}x^2$, and then $f^{\circ{2}}(x)=2x^4$. We thus ask what is the ``minimal'' field of definition in the sequence of iterates of $f$. To be precise, we define the \emph{field of iterates of $f$} (over $K$) to be $\bigcap_{i=1}^{\infty}K(f^{\circ i})$. This intersection must stabilize after a finite number of steps since $K\subseteq K(f^{\circ n})\subseteq{K(f)}$ for any integer $n\ge{1}$ and  $K(f)$ is a finite extension of $K$. 
\begin{rem}
The field of iterates of $f$ over $K$ is always a field of definition of an iterate $f^{\circ{N}}$ for some integer $N\ge{1}$. This is a consequence of the fact that $K(f^{\circ{mn}})$ is contained in the intersection $K(f^{\circ{m}})\cap K(f^{\circ{n}})$ and therefore, if the intersection $\bigcap_{i=1}^{\infty}K(f^{\circ i})$ stabilizes at the index $N$ we have that $$\bigcap_{i=1}^{\infty}K(f^{\circ i})\subseteq K(f^{\circ N!}) \subseteq \bigcap_{i=1}^{N}K(f^{\circ i}) = \bigcap_{i=1}^{\infty}K(f^{\circ i}).$$
\end{rem}

We now formulate the main problem we study in this article:

Assuming that for $f\in\bar{K}(x)\setminus K(x)$ and some integer $n\ge{2}$ we have $f^{\circ{n}}\in K(x)$,
 what can be said about $f(x)$? We have already seen above that this problem is not vacuous, i.e., there exist $f\in{\bar{K}(x)}\setminus{K(x)}$ such that $f^{\circ{n}}\in{K(x)}$ for some $n\ge{2}$. 

For any number field $F$ and an integer $n\ge{1}$, we define
\[A_n(F):=\{f(x)\in{\bar{K}(x)}\mid f^{\circ n}\in{F(x)}\},\]
and 
\[A(F):=\bigcup_{n=1}^{\infty}A_n(F).\]
If $F$ is a finite extension of $K$, then the set $A(F)$ is the set of all functions in ${\bar{K}(x)}$ whose field of iterates over $K$ is contained in $F$.

For ease of notation in the statement of our results, we will write 
\[S_n(k)=\sum_{i=0}^{n-1}{k^i},\]
where $k$ and $n$ are positive integers. This quantity is of course equal to $\frac{k^n-1}{k-1}$ when $k\ne{1}$.

We now state the characterization of polynomials that are in the set $A(K)$.

\begin{theorem}\label{thm:main}
Fix integers $d,n\ge{1}$, and
let $f(x)\in{\bar{K}[x]} \setminus K[x]$ be a polynomial of degree $d$. Then $f\in A_n(K)$ if and only if $f$ is $K$-linearly conjugate (by an affine transformation $\ell(x)=\alpha{x}+\beta$) to a polynomial $\tilde{f}$ of the form
\begin{equation}\label{eqn:main}
\tilde{f}(x)=ax^{k}g(x^{t}),
\end{equation}
where $g(x) \in {K[x]}$, $k$ and $t$ are positive integers such that $t|S_n(k)$ and $a^t\in{K}$. Moreover, the field of definition of $f$ is equal to $K(a)$ and it is a simple radical extension of $K$ of degree at most $S_n(d)$; if $\tilde{f}$ is not a monomial then $[K(f):K]\le{d}$.
\end{theorem}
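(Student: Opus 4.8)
The ``if'' direction is a computation. Write $h(x)=x^{k}g(x^{t})\in K[x]$, so that $\tilde f=a\cdot h$; an induction on $m$ shows $\tilde f^{\circ m}(x)=a^{S_m(k)}h_m(x)$ with $h_m\in K[x]$, the point being that $h$ involves only monomials $x^{j}$ with $j\equiv k\pmod t$, and that $a^{t}\in K$ makes $(a^{S_m(k)})^{t}\in K$, so that substituting $a^{S_m(k)}\cdot(\text{element of }K[x])$ into $h$ again yields $a^{k\,S_m(k)}\cdot(\text{element of }K[x])$ and the $a$-exponent moves from $S_m(k)$ to $1+k\,S_m(k)=S_{m+1}(k)$. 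Since $t\mid S_n(k)$ we get $a^{S_n(k)}\in K$, hence $\tilde f^{\circ n}\in K[x]$; then $f=\ell\circ\tilde f\circ\ell^{-1}$ with $\ell$ a $K$-affine map gives $f^{\circ n}=\ell\circ\tilde f^{\circ n}\circ\ell^{-1}\in K[x]$, i.e.\ $f\in A_n(K)$. The identities $K(f)=K(\tilde f)=K(a)$ and the degree bounds are immediate from the shape of $\tilde f$ (one uses $[K(a):K]\le t\le S_n(k)\le S_n(d)$, and $t\le d-1$ when $g$ is non-constant).

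For the ``only if'' direction assume $f^{\circ n}\in K[x]$; the case $d=1$ is elementary (it amounts to describing which affine maps have an iterate defined over $K$), so suppose $d\ge 2$. For each $\sigma\in\gal(\bar K/K)$ we have $(f^{\sigma})^{\circ n}=(f^{\circ n})^{\sigma}=f^{\circ n}$, so $f$ and $f^{\sigma}$ are degree-$d$ polynomials sharing their $n$-th iterate. I would first prove a \emph{linearization lemma}: if $p,q\in\bar K[x]$ have degree $d\ge 2$ and $p^{\circ n}=q^{\circ n}$, then $q=\eta p+\delta$ for some $\eta,\delta\in\bar K$ with $\eta^{S_n(d)}=1$. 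The tool is the Böttcher coordinate $\psi$ of $p$ (the formal series at $\infty$ with $\psi\circ p=\psi^{\,d}$): it is automatically a (suitably normalized) Böttcher coordinate of $p^{\circ n}$, Böttcher coordinates of a given polynomial differ only by a $(\deg-1)$-th root of unity, and so $p^{\circ n}=q^{\circ n}$ forces $\psi$ and the Böttcher coordinate of $q$ to differ by some $\mu$ with $\mu^{d^{n}-1}=1$; unwinding $p=\psi^{-1}\circ(x\mapsto x^{d})\circ\psi$ gives $q=\psi^{-1}\circ(x\mapsto \mu^{d-1}x^{d})\circ\psi=\theta\circ p$ with $\theta$ a formal Laurent series of the form $\mu^{d-1}x+O(1)$, and since composing $\theta$ with the polynomial $p$ must produce the polynomial $q$, the negative-degree part of $\theta$ vanishes, so $\theta(x)=\mu^{d-1}x+\delta$ is affine; put $\eta=\mu^{d-1}$, which satisfies $\eta^{S_n(d)}=\mu^{d^{n}-1}=1$. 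Applied to $q=f^{\sigma}$ this yields $f^{\sigma}=\eta_{\sigma}f+\delta_{\sigma}$ with $\eta_{\sigma}^{S_n(d)}=1$ for every $\sigma$.

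Comparing coefficients in $f^{\sigma}=\eta_\sigma f+\delta_\sigma$ shows every positive-degree coefficient of $f$ transforms by $\eta_\sigma$, so $f=a\bar h+a_0$ with $a$ the leading coefficient, $\bar h\in K[x]$ monic and $\bar h(0)=0$, and $a^{S_n(d)}\in K$. One further checks $\gamma:=a_{d-1}/(d\,a_d)\in K$ and (tracking $\delta_\sigma$ through the previous step) that $(a_0+\gamma)/a\in K$; conjugating by the $K$-affine translation $x\mapsto x-\gamma$ then reduces to the case $f=a\,r$ with $r\in K[x]$ monic and centered, $a\notin K$, $f^{\circ n}\in K[x]$ and $f^{\sigma}=\eta_\sigma f$ for all $\sigma$. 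The heart of the matter is now a combinatorial claim, which I would obtain by expanding $f^{\circ n}$ — equivalently by noting that the polynomial $z\mapsto(zr)^{\circ n}$, whose coefficients lie in $K[x]$, takes the same value at $a$ as at all its conjugates $\eta_\sigma a$ — namely: writing $k=\operatorname{ord}_0(r)$ and $r(x)=x^{k}g_0(x)$ with $g_0\in K[x]$, $g_0(0)\neq0$, and letting $t$ be the least positive integer with $a^{t}\in K$, one has $t\mid S_n(k)$ and $t\mid j$ for every $j$ in the support of $g_0$ (in particular $k\ge 1$, since $k=0$ would force $t\mid S_n(0)=1$, i.e.\ $a\in K$). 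Here $t\mid S_n(k)$ drops out of the coefficient of $x^{k^{n}}$ in $f^{\circ n}$, which equals $\bigl(a\,g_0(0)\bigr)^{S_n(k)}$; the divisibilities $t\mid j$ come from choosing, for each exponent $j$ of $g_0$, a monomial of $f^{\circ n}$ in which two powers of $a$ differing by a positive multiple of $j$ occur, one of which is already known to lie in $K$. Granting this, $g_0(x)=g(x^{t})$ for some $g\in K[x]$, so $f=a\,x^{k}g(x^{t})$ has the required shape and the ``moreover'' clause follows as before. The step I expect to be genuinely delicate is this last combinatorial one: organizing the rather involved expansion of $f^{\circ n}$ so as to isolate exactly the coefficients carrying the needed divisibility information, and in particular ruling out $k=0$.
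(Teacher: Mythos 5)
Your ``if'' direction and the degree/radical bounds are correct and essentially identical to the paper's Lemma~\ref{lem:nth-iterate}. The Böttcher-coordinate linearization lemma is also correct and cleanly proved (and your formula $\delta_\sigma=(\eta_\sigma-1)\,a_{d-1}/(d\,a_d)$ does indeed drop out of the expansion $\psi_p(x)=cx+c_0+O(1/x)$, $c_0/c=a_{d-1}/(d\,a_d)$, so the translation step is fine). It replaces the Müller--Zieve decomposition lemma the paper invokes, and in that respect it is a genuinely different and attractive route.

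The real problem is exactly the step you flag: the ``combinatorial claim'' is not a detail to be organized but the place where your approach is missing an input the paper has. The Müller--Zieve lemma applied to the full chain $f\circ\cdots\circ f=f^\sigma\circ\cdots\circ f^\sigma$ produces \emph{both} a left relation $f=\ell_2\circ f^\sigma$ and a right relation $f=f^\sigma\circ\ell_1$; combining them gives $f=\ell_2\circ f\circ\ell_1^{-1}$, and comparing the coefficient of $x^i$ gives $a_i=\alpha_2 a_i\alpha_1^{-i}$, which forces all exponents in the support to lie in one residue class modulo $\operatorname{ord}(\alpha_1)$, and the whole $x^k g(x^t)$ structure falls out in one line. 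Your Böttcher lemma as stated yields only the left relation $f^\sigma=\eta_\sigma f+\delta_\sigma$; the coefficientwise consequence $\sigma(a_i)=\eta_\sigma a_i$ tells you $a_i/a_j\in K$ for all $i,j\ge 1$, but says nothing about which exponents appear, so it cannot by itself give $t\mid j$. Moreover your plan for the direct expansion runs into two concrete obstacles: in the expansion $f^{\circ 2}=\sum_j c_j\,a^{1+k+j}\,x^{k^2+kj}\,g_0(x)^{k+j}$ the terms for different $j$ contribute to the same monomials, so one cannot read off $a^{1+k+j}\in K$ coefficient by coefficient; and the identification of the lowest-degree coefficient of $f^{\circ n}$ with $(a\,g_0(0))^{S_n(k)}$, which you use both for $t\mid S_n(k)$ and to rule out $k=0$, presupposes $\operatorname{ord}_0(f^{\circ n})=k^n$, which is only valid once $k\ge1$, so the exclusion of $k=0$ is circular as you have set it up.

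The gap is fillable within your framework, but not by the expansion you sketch. One way: apply your Böttcher lemma to all the iterates simultaneously (since $p^{\circ mn}=q^{\circ mn}$ for all $m$) to get affine $\theta_m$ with $q^{\circ m}=\theta_m\circ p^{\circ m}$, whence $q\circ\theta_m=\theta_{m+1}\circ p$ and so $p\circ\theta_1=(\theta_1^{-1}\circ\theta_2)\circ p$; or, equivalently, observe directly from the Böttcher picture that $L=\psi_p^{-1}\circ(z\mapsto\mu^{d-1}z)\circ\psi_p$ satisfies $p\circ L=L^{\circ d}\circ p$. Either way one recovers a right-composition relation $f\circ L_\sigma=L_\sigma^{\circ d}\circ f$, and comparing coefficients in $f=L_\sigma^{\circ(-d)}\circ f\circ L_\sigma$ gives the arithmetic-progression constraint exactly as in the paper. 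Without some such supplementary relation, the proof is not complete.
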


If we don't specify which iterate is defined over the base field, the result can be given a simpler form.

\begin{theorem} \label{thm:mainB}
A non-constant polynomial $f\in{\bar{K}[x]}\setminus K[X]$ is in  ${A(K)}$ if and only if  $f$ is $K$-linearly conjugate to a polynomial $\tilde{f}$ of the form 
\begin{equation}
\tilde{f}(x)=ax^{k}g(x^{t}),
\end{equation}
where $k$ and $t$ are positive coprime integers, $g\in{K[x]}$ and $a^t\in{K}$.
\end{theorem}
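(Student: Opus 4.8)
The plan is to deduce Theorem~\ref{thm:mainB} from Theorem~\ref{thm:main}, the only genuine content being two elementary observations about the integers $S_n(k)$.

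For the ``only if'' direction, assume $f\in A(K)$, so that $f\in A_n(K)$ for some $n\ge 1$. Since $f\notin K[x]$, Theorem~\ref{thm:main} applies and yields a $K$-linear conjugate $\tilde f(x)=ax^k g(x^t)$ of $f$ with $g\in K[x]$, $a^t\in K$, and $t\mid S_n(k)$. It then suffices to observe that $k$ and $t$ are necessarily coprime. Put $e=\gcd(k,t)$; then $e\mid t\mid S_n(k)$, while at the same time $e\mid k$ forces $S_n(k)=1+k\,S_{n-1}(k)\equiv 1\pmod e$. Hence $e\mid 1$, so $e=1$ and $\tilde f$ already has the required shape.

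For the ``if'' direction, assume $f$ is $K$-linearly conjugate to $\tilde f(x)=ax^k g(x^t)$ with $k,t$ positive coprime integers, $g\in K[x]$ and $a^t\in K$. By Theorem~\ref{thm:main} it is enough to produce an integer $n\ge 1$ with $t\mid S_n(k)$, for then $\tilde f\in A_n(K)$ and therefore $f\in A_n(K)\subseteq A(K)$. Consider the affine self-map $\phi\colon\Z/t\Z\to\Z/t\Z$ given by $\phi(x)=kx+1$; since $\gcd(k,t)=1$ its linear part is invertible, so $\phi$ is a bijection of the finite set $\Z/t\Z$. A one-line induction shows $\phi^{\circ n}(0)\equiv S_n(k)\pmod t$ for every $n\ge 0$, so it suffices to find $n\ge 1$ with $\phi^{\circ n}(0)=0$. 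But $\phi$ is injective on a finite set, so the orbit $0,\phi(0),\phi^{\circ 2}(0),\dots$ is purely periodic and therefore returns to $0$; any such return time works.

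In short, this statement is essentially a corollary of Theorem~\ref{thm:main}, and the only point requiring any thought is the arithmetic of $S_n(k)$: that the divisibility $t\mid S_n(k)$ automatically implies $\gcd(k,t)=1$, and conversely that whenever $\gcd(k,t)=1$ some $S_n(k)$ is divisible by $t$, via the elementary dynamics of $x\mapsto kx+1$ on $\Z/t\Z$. The degenerate cases $t=1$, $k=1$ or $n=1$ are all immediate and can be checked separately if desired.
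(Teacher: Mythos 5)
Your proof is correct and follows the same overall strategy as the paper: reduce Theorem~\ref{thm:mainB} to Theorem~\ref{thm:main} by establishing that $t\mid S_n(k)$ for some $n\ge 1$ if and only if $\gcd(k,t)=1$. For the forward implication (divisibility implies coprimality) your argument via $e=\gcd(k,t)$ dividing both $S_n(k)$ and $S_n(k)-1$ is essentially the paper's argument rephrased, since the paper simply observes that $1\equiv k(-1-\dotsb-k^{n-2})\pmod t$ makes $k$ a unit mod $t$. For the converse, however, you take a genuinely different and cleaner route: the paper splits into cases $k\equiv 1\pmod t$ (use $n=t$) and $k\not\equiv 1\pmod t$ (take $n$ the multiplicative order of $k$ modulo $t(k-1)$), whereas you observe that $x\mapsto kx+1$ is a bijection of $\Z/t\Z$ with $\phi^{\circ n}(0)\equiv S_n(k)$, so purely periodic dynamics forces a return to $0$. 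Your argument avoids the case split and the somewhat opaque choice of modulus $t(k-1)$, at the price of being nonconstructive about the specific $n$; both are perfectly valid, and yours is arguably the more transparent of the two.
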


A similar characterization does not hold for general rational functions. We denote by $B(K)$ the set of rational functions $f\in{\bar{K}(x)}$ such that $f$ is $K$-linearly conjugate to a rational function $\tilde{f}(x)=ax^{k}g(x^{t})\in{\bar{K}(x)}$ with $k,t$ coprime, $g(x)\in K(x)$ and $a^t\in K$.
We will see that the following Proposition holds.
\begin{prop}
Not all rational functions in $A(K)$ are $K$-linearly conjugate to functions of the shape $ax^{k}g(x^{t})$ as above, i.e.
$$B(K)\subsetneq A(K).$$    
\end{prop}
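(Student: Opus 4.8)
The plan is to establish the easy inclusion $B(K)\subseteq A(K)$ and then to exhibit an explicit family realizing the strict inequality. For the inclusion, since membership in $K(x)$ is preserved by conjugation by an element of $\pgl_2(K)$, it suffices to show that each $\tilde f(x)=ax^{k}g(x^{t})$ with $g\in K(x)$, $a^{t}\in K$ and $\gcd(k,t)=1$ lies in $A_{n}(K)$ for some $n$. Writing $\tilde f=a\cdot m$ with $m(x)=x^{k}g(x^{t})\in K(x)$, one proves by induction — using $a^{t}\in K$, so that $(\tilde f^{\circ n})^{t}\in K(x)$ and hence $g\bigl((\tilde f^{\circ n})^{t}\bigr)\in K(x)$ at each stage — the identity
\[\tilde f^{\circ n}(x)=a^{\,S_{n}(k)}\,r_{n}(x),\qquad r_{n}\in K(x),\]
with the inductive step powered by $S_{n+1}(k)=1+kS_{n}(k)$. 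Thus $\tilde f\in A_{n}(K)$ as soon as $a^{S_{n}(k)}\in K$, which holds whenever $t\mid S_{n}(k)$. Finally, since $\gcd(k,t)=1$ the map $s\mapsto ks+1$ is a bijection of $\Z/t\Z$; its orbit starting from $0$ realizes the reductions $S_{1}(k),S_{2}(k),\dots$ modulo $t$, and, being a cycle, it returns to $0$, so $t\mid S_{n}(k)$ for some $n\ge 1$ and $\tilde f\in A(K)$.

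For the strict inclusion I would use degree-one maps. Fix $\beta\in\bar K\setminus K$ and set $f(x)=\beta-x$; then $f\notin K(x)$ while $f^{\circ 2}=\mathrm{id}\in K(x)$, so $f\in A_{2}(K)$. Showing $f\notin B(K)$ is the heart of the matter. The invariant I would compare is the multiset of degrees over $K$ of the fixed points, which is unchanged under conjugation by any $\ell\in\pgl_2(K)$: indeed $\mathrm{Fix}(\ell^{-1}\circ h\circ\ell)=\ell^{-1}(\mathrm{Fix}(h))$, and $\ell^{-1}$, having coefficients in $K$, satisfies $[K(\ell^{-1}(P)):K]=[K(P):K]$ for every point $P$. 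For $f(x)=\beta-x$ one has $\mathrm{Fix}(f)=\{\infty,\beta/2\}$, so this multiset is $\{1,\,d\}$ with $d=[K(\beta):K]\ge 2$. On the other hand, a short case analysis of the possible degree-one shapes $\tilde f=ax^{k}g(x^{t})$ shows that such an $\tilde f$ either lies in $K(x)$ or is $K$-conjugate to $cx$ or to $c/x$ with $c^{t}\in K$; since $f$ has order $2$ in $\pgl_2(\bar K)$ and $f\notin K(x)$, the only remaining possibility is that $f$ is $K$-conjugate to some $c/x$ with $c\notin K$. But the fixed points of $c/x$ are $\pm\sqrt c$, which generate the same field over $K$, so its degree multiset is $\{e,e\}$ — never $\{1,d\}$ for $d\ge 2$. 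Hence $f\notin B(K)$, and $B(K)\subsetneq A(K)$. The same idea should yield further families: other finite-order fractional linear maps, and genuinely higher-degree rational maps, where one compares instead the degrees of the fixed or critical points, or the multiplier data along periodic cycles, against the rigidity inherent in the normal form $ax^{k}g(x^{t})$.

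I expect the only real obstacle to be this last verification — certifying that a given $f$ is not $K$-conjugate to any $ax^{k}g(x^{t})$. It cannot be detected from the field of definition alone: in the family above $K(f)=K(\beta)$ is itself a simple radical extension of $K$, exactly the shape predicted for members of $B(K)$. So one must isolate a genuinely conjugacy-theoretic invariant and, at the same time, control all the shapes $ax^{k}g(x^{t})$ of the relevant degree well enough to be sure the invariant cannot match.
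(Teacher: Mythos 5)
Your argument does prove the Proposition, and it takes a genuinely different route from the paper's. The paper's witness is the degree-two rational function $f(x)=\frac{cx^2-2x-c}{x^2+2cx-1}$ with $c=2-\sqrt 3$, which is excluded from $B(\Q)$ via Lemma~\ref{lem:periodic}: any member of $B(K)$ has a $K$-rational fixed point or a $K$-rational $2$-cycle, and one checks $f$ has neither. Your degree-one example $f(x)=\beta-x$ actually \emph{does} have the $K$-rational fixed point $\infty$, so Lemma~\ref{lem:periodic} alone would not exclude it; you need the finer invariant — the full multiset of degrees over $K$ of the fixed points — and your justification of its $\pgl_2(K)$-invariance, together with the case analysis showing that a degree-one map of the shape $ax^kg(x^t)$ is either in $K(x)$ or equal to $cx$ or $c/x$, is correct.

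Be aware, though, that your example proves more than the Proposition claims, and uncomfortably so. Since $f(x)=\beta-x$ is a \emph{polynomial} in $A_2(K)\setminus K[x]$, Theorem~\ref{thm:main} asserts it should be $K$-affine-conjugate to some $ax^kg(x^t)$ with $g\in K[x]$ and $k\ge 1$; but any such conjugate $\ell\circ\tilde f\circ\ell^{-1}$ (with $\ell$ affine over $K$) fixes the $K$-rational finite point $\ell(0)$, while the only finite fixed point of $f$ is $\beta/2\notin K$. So your $f$ is in fact a counterexample to Theorem~\ref{thm:main} at $d=1$. The gap lies in the $d=1$ case of Lemma~\ref{lem:quot-of-two-leading}: from $f^{\circ n}(x)=\alpha^n x+\frac{\alpha^n-1}{\alpha-1}\beta\in K[x]$ one may deduce $\frac{\beta}{\alpha-1}\in K$ only when $\alpha^n\ne 1$; when $\alpha$ is a nontrivial $n$-th root of unity (here $\alpha=-1$, $n=2$) that coefficient vanishes and $\beta$ is unconstrained. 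Thus your family establishes $B(K)\subsetneq A(K)$ but does so by hitting an edge case in the polynomial classification itself; to illustrate the phenomenon the paper actually intends — that the classification, valid for polynomials, fails to extend to non-polynomial rational maps — one still wants an example of degree $\ge 2$ such as the paper's.
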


That $B(K)\subseteq A(K)$ follows from the relevant direction of the proof of Theorem~\ref{thm:mainB}. To see that they are distinct we will consider the function
\begin{equation} \label{eq:counterexample1}
f(x)=\frac{cx^2-2x-c}{x^2+2cx-1},
\end{equation}
where $c = 2-\sqrt{3}$. We will see in Section~\ref{sec:examples} that $f\in A_2(\mathbb{Q})$ but $f\not\in B(\mathbb{Q})$.

Given a rational function $f\in{A(k)}$, there are several ways in which
we can construct other rational functions in $A(k)$ from $f$. We now provide a list of (elementary) ways in which this can be done (we do not claim that this list is exhaustive). 

\begin{prop} \label{prop:conj}
Let $f\in{A_n(K)}$. 
\begin{enumerate}
    \item \label{prop:conj-item1} Then $f^{\circ{k}}\in A_m(K)$, where $m=\frac{n}{\gcd(k,n)}.$
    \item \label{prop:conj-item2} Let $\ell\in{\pgl_2(K)}$ be a fractional linear transformation. Then $f^\ell \in A_n(K)$.
    \item \label{prop:conj-item3} Let $g\in A_m(K)$ such that $f\circ g=g\circ f$. Then $f\circ g\in A_{\lcm(n,m)}(K)$. In particular, if $\ell\in{\pgl_2(\bar{K})}$ is an automorphism of $f$ of finite order $m$, then $\ell\circ f\in A_{\lcm(n,m)}(K)$.
    \item If $\ell\in \pgl_2(\bar{K})$ and $\ell\circ f=f\circ\ell^{\circ h}$ for an integer $h\ge{1}$ and $\ell\in A_{S_{n+1}(h)-1}(K)$ then $\ell\circ f\in A_n(K)$.

    \item \label{prop:conj-item5} Suppose $g\in{K(x)}$ and there exists $\ell\in{\pgl_2(\bar{K})}$ such that $g=\ell^{-1}\circ{f^{\circ{n}}}\circ{\ell}$, i.e. $g$ is a twist of $f^{\circ{n}}$. Then $f^\ell\in{A_n(K)}$.
\end{enumerate}
\end{prop}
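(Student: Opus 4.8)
The plan is to treat each of the five items by a direct computation with iterates, reducing everything to two elementary closure properties: a composition of rational functions defined over $K$ is again defined over $K$, and if $h\in K(x)$ and $\ell\in\pgl_2(K)$ then the conjugate $h^\ell$ lies in $K(x)$. With these in hand, items (1), (2), (3) and (5) are immediate, and only item (4) requires an actual identity to be set up first.

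For (1), write $(f^{\circ k})^{\circ m}=f^{\circ km}$ and observe that $km=k\cdot n/\gcd(k,n)=\lcm(k,n)$, which is a multiple of $n$; since $f^{\circ n}\in K(x)$, every iterate $f^{\circ jn}=(f^{\circ n})^{\circ j}$ lies in $K(x)$, so in particular $f^{\circ km}\in K(x)$. For (2), $(f^\ell)^{\circ n}=\ell^{-1}\circ f^{\circ n}\circ\ell=(f^{\circ n})^\ell\in K(x)$ by the second closure property. For (3), commutativity of $f$ and $g$ gives $(f\circ g)^{\circ N}=f^{\circ N}\circ g^{\circ N}$ for every $N\ge 1$; with $N=\lcm(n,m)$ both $f^{\circ N}=(f^{\circ n})^{\circ N/n}$ and $g^{\circ N}=(g^{\circ m})^{\circ N/m}$ lie in $K(x)$, hence so does their composition. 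The stated special case follows on taking $g=\ell$: an automorphism $\ell$ of $f$ satisfies $\ell\circ f=f\circ\ell$ by definition, and if it has finite order $m$ then $\ell^{\circ m}=\mathrm{id}\in K(x)$, so $\ell\in A_m(K)$. For (5), $(f^\ell)^{\circ n}=(f^{\circ n})^\ell=g\in K(x)$ directly.

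For (4) the relation $\ell\circ f=f\circ\ell^{\circ h}$ must be used to move all copies of $\ell$ appearing in $(\ell\circ f)^{\circ n}$ to the right of all the $f$'s. First I would record, by induction on $j$, that $\ell^{\circ j}\circ f=f\circ\ell^{\circ hj}$, and then, iterating this one factor of $f$ at a time, that $\ell\circ f^{\circ r}=f^{\circ r}\circ\ell^{\circ h^r}$ for every $r\ge 1$. Using these I would prove by induction on $n$ the key identity
\[(\ell\circ f)^{\circ n}=f^{\circ n}\circ\ell^{\circ\,(S_{n+1}(h)-1)},\]
the inductive step being $(\ell\circ f)^{\circ(n+1)}=(\ell\circ f)\circ(\ell\circ f)^{\circ n}=\ell\circ f^{\circ(n+1)}\circ\ell^{\circ(S_{n+1}(h)-1)}=f^{\circ(n+1)}\circ\ell^{\circ h^{n+1}}\circ\ell^{\circ(S_{n+1}(h)-1)}$ together with the arithmetic fact $h^{n+1}+\bigl(S_{n+1}(h)-1\bigr)=S_{n+2}(h)-1$. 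Given this identity, the hypotheses $f^{\circ n}\in K(x)$ (from $f\in A_n(K)$) and $\ell^{\circ(S_{n+1}(h)-1)}\in K(x)$ (from $\ell\in A_{S_{n+1}(h)-1}(K)$) force $(\ell\circ f)^{\circ n}\in K(x)$, that is, $\ell\circ f\in A_n(K)$.

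The only real point of care is the bookkeeping in item (4): tracking the exponent of $\ell$ through the composition and confirming it equals $S_{n+1}(h)-1=h+h^2+\cdots+h^n$, and noting that the degenerate case $h=1$ (where $S_{n+1}(1)-1=n$) simply recovers the commuting situation of (3). No deeper input is needed.
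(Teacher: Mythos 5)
Your proof is correct. All five items are handled cleanly: (1), (2), (3), and (5) follow as you say from the two closure properties (composition of $K$-rational functions is $K$-rational, and conjugation by $\ell\in\pgl_2(K)$ preserves $K$-rationality), together with the elementary identities $(f^{\circ k})^{\circ m}=f^{\circ km}$, $(f^\ell)^{\circ n}=(f^{\circ n})^\ell$, and $(f\circ g)^{\circ N}=f^{\circ N}\circ g^{\circ N}$ for commuting $f,g$. The only item with real content is (4), and your computation there is sound: the commutation relations $\ell^{\circ j}\circ f=f\circ\ell^{\circ hj}$ and $\ell\circ f^{\circ r}=f^{\circ r}\circ\ell^{\circ h^r}$ follow by straightforward induction from the hypothesis, the key identity $(\ell\circ f)^{\circ n}=f^{\circ n}\circ\ell^{\circ(S_{n+1}(h)-1)}$ then follows by induction on $n$ using $h^{n+1}+(S_{n+1}(h)-1)=S_{n+2}(h)-1$, and the conclusion is immediate. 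Your observation that $h=1$ degenerates to a special case of (3) is a good sanity check. For reference, the paper states this proposition without proof (treating it as elementary), so there is no proof in the source to compare against; your argument is the natural one and fills the gap correctly.
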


Property \ref{prop:conj}~\eqref{prop:conj-item2} is a special case of Property  \ref{prop:conj}~\eqref{prop:conj-item5}, and is false for a general element in $\pgl_2(\bar{K})$. The topic of commuting rational functions, as in Property~\ref{prop:conj}~\eqref{prop:conj-item3}, has been studied extensively by Julia~\cite{julia}, Fatou~\cite{fatou} and Ritt~\cite{ritt}, and more recently by Er\"{e}menko~\cite{eremenko1990} and Pakovich~\cite{pakovich2021}. To summarize, two rational functions $f,g\in{\bar{K}}(x)$ of degree $\ge 2$ commute, i.e. $f\circ{g}=g\circ{f},$ if and only if $f$ and $g$ share a common iterate (i.e., $f^{\circ{m}}=g^{\circ{n}}$ for some positive integers $m$ and $n$) or $f$ and $g$ are $\bar{K}$-linearly conjugate to power maps, Latt\`es maps or  Chebyshev polynomials.

\bigskip

The paper is organized as follows: in Section~\ref{sec:proof.main} we give the proof of Theorems~\ref{thm:main} and \ref{thm:mainB}.
In Section~\ref{sec:examples} we give examples of rational functions in $A(K)\setminus B(K)$ i.e. not of the shape \eqref{eqn:main}. We construct these examples, which are related to Chebyshev polynomials and Lattés maps, using the group law on the circle and on an elliptic curve. 
In Section~\ref{sec:PGL} we focus on rational functions of degree one, and show that all rational functions of degree one in $A(K)$ can be described in a simple way in terms of the eingenvalues of the matrix of their coefficients.

\section*{Acknowledgements}
We thank Pietro Corvaja and Fedor Pakovich for useful suggestions and remarks.
We thank Umberto Zannier and the Scuola Normale di Pisa for support during the preparation of this article.
The first author is a member of the GNSAGA group of the Istituto Nazionale di Alta Matematica.

\section{Proof of the main theorems}\label{sec:proof.main}

We begin with a few lemmas that will be used in the proof of Theorem~\ref{thm:main}. 

\begin{lemma}[M{\"u}ller and Zieve \cite{zieve2008ritt}, Corollary 2.9]\label{lem:zieve-muller1}
Suppose $a,b,c,d\in{\mathbb{C}[x]\setminus{\mathbb{C}}}$, such that $a\circ{b}=c\circ{d}$, where $\deg{a}=\deg{c}$. Then there exists a linear polynomial $\ell\in\mathbb{C}[x]$ such that $a=c\circ{l}$ and $b=\ell^{-1}\circ{d}$. 
\end{lemma}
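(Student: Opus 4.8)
The plan is to prove the statement by a direct comparison of coefficients, after a harmless affine normalization. First note that comparing degrees in $a\circ b=c\circ d$ forces $\deg b=\deg d$: indeed $\deg(a\circ b)=(\deg a)(\deg b)$ and $\deg a=\deg c=:m$, so $\deg b=\deg d=:n$. Next observe that the conclusion is invariant under replacing the quadruple $(a,b,c,d)$ by $(a\circ\rho^{-1},\,\rho\circ b,\,c\circ\sigma^{-1},\,\sigma\circ d)$ for affine $\rho,\sigma$ (if $\ell$ works for the new quadruple, then $\sigma^{-1}\circ\ell\circ\rho$ works for the old one, and the identity $a\circ b=c\circ d$ is preserved). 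Choosing $\rho(y)=y/\beta$ and $\sigma(y)=y/\delta$, where $\beta,\delta$ are the leading coefficients of $b,d$, I may therefore assume $b$ and $d$ are monic. Then $a\circ b$ and $c\circ d$ have leading coefficients equal to those of $a$ and $c$ respectively; these agree, so after dividing the whole identity by this common constant I may also assume $a$ and $c$ are monic.

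Now I expand. Write $a(Y)=\sum_{k\le m}\alpha_k Y^k$ with $\alpha_m=1$, set $u=1/x$ and $B(u)=b(x)/x^{n}=1+b_{n-1}u+\dots+b_0u^{n}$. Then
\[
\frac{a(b(x))}{x^{mn}} \;=\; B(u)^m+\sum_{k<m}\alpha_k\,u^{(m-k)n}B(u)^k,
\]
and since $(m-k)n\ge n$ for every $k<m$, the coefficient of $u^{j}$ in $a(b)/x^{mn}$ equals the coefficient of $u^{j}$ in $B(u)^m$ for all $j\le n-1$. The same holds on the $c\circ d$ side with $D(u)=d(x)/x^{n}$. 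The coefficient of $u^{j}$ in $B(u)^m$ has the form $m\,b_{n-j}$ plus a polynomial in $b_{n-1},\dots,b_{n-j+1}$, so an induction on $j=1,\dots,n-1$ — legitimate in characteristic zero, where $m\ne 0$ — gives $b_{n-j}=d_{n-j}$ for all $1\le j\le n-1$. Hence $b$ and $d$ agree in every coefficient except possibly the constant one, i.e.\ $d=b+\gamma=\tau\circ b$ for the affine map $\tau(y)=y+\gamma$. (This also covers $n=1$, where after normalization $b,d$ are monic linear and so differ only by a constant.)

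Substituting $d=\tau\circ b$ into $a\circ b=c\circ d$ yields $a\circ b=(c\circ\tau)\circ b$; since $b$ is non-constant it takes infinitely many values, so $a$ and $c\circ\tau$ agree at infinitely many points, whence $a=c\circ\tau$. Thus $\ell=\tau$ works for the normalized quadruple, and unwinding the normalization produces the required affine $\ell$ with $a=c\circ\ell$ and $b=\ell^{-1}\circ d$. I do not expect a real obstacle: the delicate points are merely the bookkeeping of the affine normalization and the verification that the terms $\alpha_k\,u^{(m-k)n}B(u)^k$ with $k<m$ do not contribute to the coefficients of $u^{0},\dots,u^{n-1}$. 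One could instead argue via Lüroth's theorem, proving $\mathbb{C}(b)=\mathbb{C}(d)$ inside $\mathbb{C}(x)$ through a ramification count at $\infty$, but the coefficient computation above is shorter and entirely self-contained.
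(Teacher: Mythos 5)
The paper states this lemma as a citation to M\"uller and Zieve (their Corollary~2.9) and does not include a proof, so there is no argument in the paper to compare against; your proof is a self-contained replacement. It is correct. The normalization step is valid: replacing $(a,b,c,d)$ by $(a\circ\rho^{-1},\rho\circ b,c\circ\sigma^{-1},\sigma\circ d)$ preserves both the hypothesis and the conclusion, and once $b,d$ are monic the leading coefficients of $a$ and $c$ must agree, so post-composing by a common scaling makes them monic too. The expansion of $a(b(x))/x^{mn}$ in $u=1/x$ is also correct: the terms $\alpha_k u^{(m-k)n}B(u)^k$ with $k<m$ carry $u$-degree at least $n$, so the coefficients of $u^0,\dots,u^{n-1}$ come entirely from $B(u)^m$, and the coefficient of $u^j$ there has the form $m\,b_{n-j}$ plus a universal polynomial in $b_{n-1},\dots,b_{n-j+1}$; since $m\neq 0$ in characteristic zero the induction pins down $b_{n-1},\dots,b_1$ and shows $d-b$ is a constant $\gamma$. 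Substituting $d=\tau\circ b$ with $\tau(y)=y+\gamma$ and cancelling $b$ on the right then gives $a=c\circ\tau$, and unwinding the normalization produces the required affine $\ell$. The argument is elementary and short; the L\"uroth-theoretic alternative you mention (identifying $\mathbb{C}(b)=\mathbb{C}(d)$ as subfields of $\mathbb{C}(x)$) is the more standard route in the literature and is what M\"uller--Zieve's development of Ritt's theory is closer to, but it requires more machinery. Your coefficient-matching proof is in the classical Ritt/Engstrom style and is well suited as a self-contained appendix if the citation were to be replaced by a proof.
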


This immediately generalizes by induction to the following lemma.

\begin{lemma} \label{lem:zieve-muller2}
Let $n$ be a positive integer, and let $f_1,\ldots,f_n,g_1,\ldots,g_n\in \mathbb{C}[x]\setminus\mathbb{C}$. 
If $f_1\circ\dotsb\circ{f_n}=g_1\circ\dotsb\circ{g_n}$ and $\deg(f_i)=\deg(g_i)$ for $1\le{i}\le{n}$ then there exist linear polynomials $\ell_0,\ldots,\ell_n\in\mathbb{C}[x]$ such that $g_i=\ell_{i-1}\circ{f_i}\circ{\ell_i}^{-1}$, for $1\le{i}\le{n}$ where $\ell_0(x)=\ell_n(x)=x$.
\end{lemma}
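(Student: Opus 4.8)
The plan is to induct on $n$. The base case $n=1$ is trivial: the hypothesis reads $f_1 = g_1$, and we take $\ell_0(x) = \ell_1(x) = x$. For the inductive step, suppose the statement holds for $n-1$ and assume
\[
f_1\circ\dotsb\circ f_n = g_1\circ\dotsb\circ g_n
\]
with $\deg(f_i) = \deg(g_i)$ for all $i$. Group the composition as $a\circ b = c\circ d$ with $a = f_1$, $b = f_2\circ\dotsb\circ f_n$, $c = g_1$, $d = g_2\circ\dotsb\circ g_n$. Since $\deg(f_i) = \deg(g_i)$ for all $i$, degrees are multiplicative under composition, so $\deg(a) = \deg(f_1) = \deg(g_1) = \deg(c)$, and all four polynomials lie in $\mathbb{C}[x]\setminus\mathbb{C}$ because each factor has degree at least $1$. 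Lemma~\ref{lem:zieve-muller1} then supplies a linear polynomial $\ell_1\in\mathbb{C}[x]$ with $g_1 = f_1\circ\ell_1^{-1}$ and $d = \ell_1\circ b$, i.e.
\[
g_2\circ\dotsb\circ g_n = \ell_1\circ f_2\circ f_3\circ\dotsb\circ f_n.
\]
(Here I am using Lemma~\ref{lem:zieve-muller1} in the form $a = c\circ\ell$, $b = \ell^{-1}\circ d$ with the roles of the two sides swapped, which is symmetric; writing $\ell_1$ for the inverse of the linear polynomial produced keeps the indices matching the statement.)

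Now set $\tilde f_2 := \ell_1\circ f_2$, which is again a polynomial of degree $\deg(f_2) = \deg(g_2)$, not constant. Then
\[
\tilde f_2\circ f_3\circ\dotsb\circ f_n = g_2\circ\dotsb\circ g_n
\]
is an equality of two $(n-1)$-fold compositions of non-constant polynomials with matching degrees in each slot. By the inductive hypothesis there exist linear polynomials $m_1, m_2,\dotsc, m_{n-1}$ with $m_1 = $ the identity, $m_{n-1} = $ the identity, and $g_2 = m_1\circ\tilde f_2\circ m_2^{-1}$, and $g_{i} = m_{i-2}\circ f_{i}\circ m_{i-1}^{-1}$ for $3\le i\le n$ (reindexing the inductive conclusion appropriately). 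Relabel $\ell_2 := m_2,\ \ell_3 := m_3,\ \dotsc,\ \ell_{n-1} := m_{n-1}$, so $\ell_{n-1}$ (hence $\ell_n$ after one more shift — see below) is the identity, and unwind: $g_2 = \tilde f_2\circ\ell_2^{-1} = \ell_1\circ f_2\circ\ell_2^{-1}$, which has exactly the required shape $g_2 = \ell_1\circ f_2\circ\ell_2^{-1}$, while for $3\le i\le n$ we already have $g_i = \ell_{i-1}\circ f_i\circ\ell_i^{-1}$ with $\ell_n = x$. Together with $g_1 = f_1\circ\ell_1^{-1} = \ell_0\circ f_1\circ\ell_1^{-1}$ where $\ell_0(x) = x$, this gives the full conclusion.

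The only genuinely delicate point is bookkeeping of the indices: one must be careful that the linear polynomial extracted at each stage of the induction is absorbed into the \emph{next} factor $f_2$ (producing $\tilde f_2$) rather than left dangling, and that the two boundary conditions $\ell_0(x) = \ell_n(x) = x$ are preserved — the left boundary because Lemma~\ref{lem:zieve-muller1} applied to the outermost factor introduces no correction on the far left, and the right boundary because it is already guaranteed by the inductive hypothesis and the innermost factor $f_n$ is never modified. I expect this reindexing to be the main (and essentially only) obstacle; there is no analytic or arithmetic content beyond the repeated application of Lemma~\ref{lem:zieve-muller1} and multiplicativity of degrees.
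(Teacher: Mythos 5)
Your approach — peel off the outermost factor with Lemma~\ref{lem:zieve-muller1}, absorb the resulting linear polynomial into $f_2$, and invoke the inductive hypothesis on the remaining $(n-1)$-fold composition — is exactly the intended induction; the paper states the lemma ``immediately generalizes by induction'' without writing it out, and this is that argument.

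One bookkeeping slip worth fixing: you introduce only $n-1$ linear polynomials $m_1,\dotsc,m_{n-1}$ from the inductive hypothesis, but the inductive hypothesis applied to an $(n-1)$-fold composition produces $n$ of them (indices $0$ through $n-1$), and your two displayed recurrences $g_2 = m_1\circ\tilde f_2\circ m_2^{-1}$ and $g_i = m_{i-2}\circ f_i\circ m_{i-1}^{-1}$ do not match the relabeling $\ell_i := m_i$ you then apply. The clean version: the inductive hypothesis gives $\mu_0,\dotsc,\mu_{n-1}$ with $\mu_0 = \mu_{n-1} = x$ such that $g_2 = \mu_0\circ\tilde f_2\circ\mu_1^{-1}$ and $g_j = \mu_{j-2}\circ f_j\circ\mu_{j-1}^{-1}$ for $3\le j\le n$; setting $\ell_0 = x$, $\ell_1$ the map from Lemma~\ref{lem:zieve-muller1}, and $\ell_i = \mu_{i-1}$ for $2\le i\le n$ then gives $\ell_n = \mu_{n-1} = x$ and all the required identities $g_i = \ell_{i-1}\circ f_i\circ\ell_i^{-1}$ in one stroke. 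With that adjustment the proof is complete and correct.
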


\begin{lemma} \label{lem:quot-of-two-leading}
 Let $f(x)=\sum_{i=0}^d{a_ix^i}\quad (a_d\neq 0)$ be a polynomial in ${\bar{K}[x]} \setminus K[x]$ of degree $d\ge 1$ such that $f^{\circ n}\in K[x]$.
 Then $f$ is $K$-linearly conjugate to a polynomial $f_1=\sum_{i=0}^d{b_ix^i}$ with $b_{d-1}=0$ and $f_1^{\circ n}\in K[x]$.
\end{lemma}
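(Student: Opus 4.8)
The plan is to realize the desired conjugation as a translation $\ell(x)=x+\beta$ chosen to annihilate the coefficient of $x^{d-1}$ in $f^\ell$. The only possible choice is $\beta=-a_{d-1}/(d\,a_d)$, which a priori lies in $\bar K$ and not in $K$ — so a priori there is no reason for $(f^\ell)^{\circ n}=\ell^{-1}\circ f^{\circ n}\circ\ell$ to stay in $K[x]$. The key point I would establish is that this particular $\beta$ is in fact the analogous ``depressing'' translation attached to $f^{\circ n}$, which is visibly in $K$ because $f^{\circ n}\in K[x]$. We may assume $d\ge 2$.

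Call $h(x)=\lambda x^{e}+\cdots\in\bar K[x]$ of degree $e\ge 2$ \emph{depressed} if its coefficient of $x^{e-1}$ vanishes, and write $[x^j]h$ for the coefficient of $x^j$ in $h$. I would first record two elementary facts, both valid because $\operatorname{char}K=0$ and leading coefficients are nonzero. \textbf{(i) Uniqueness.} For any $h$ of degree $e\ge 2$ there is a unique $\beta_h\in\bar K$ with $x\mapsto h(x+\beta_h)-\beta_h$ depressed, namely $\beta_h=-[x^{e-1}]h/(e\,[x^{e}]h)$; uniqueness follows by comparing the coefficient of $x^{e-1}$ in two candidate conjugates. \textbf{(ii) Stability under composition.} If $p,q$ are depressed of degrees $e_1,e_2\ge 2$, then $p\circ q$ is depressed. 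Indeed, writing $p=\lambda x^{e_1}+\sum_{j\le e_1-2}c_jx^{j}$, the summands $c_j\,q(x)^{j}$ with $j\le e_1-2$ have degree $\le(e_1-2)e_2<e_1e_2-1$, while $q(x)^{e_1}$ has no monomial of degree $e_1e_2-1$: lowering the total degree from its maximum $e_1e_2$ requires each of the $e_1$ factors to drop by $0$ or — since $q$ has no term of degree $e_2-1$ — by at least $2$, so the total can never drop by exactly $1$. By induction, every iterate $h^{\circ m}$ of a depressed polynomial of degree $\ge 2$ is depressed.

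Next I would set $\beta:=\beta_f$ and $\ell(x)=x+\beta$, so that $f_1:=f^\ell=f(x+\beta)-\beta$ is depressed of degree $d$, i.e.\ $b_{d-1}=0$. Since conjugation commutes with iteration, $f_1^{\circ n}=\ell^{-1}\circ f^{\circ n}\circ\ell=f^{\circ n}(x+\beta)-\beta$, which has degree $d^{n}\ge 2$ and is depressed by (ii); hence by (i), $\beta$ is the unique depressing translation of $f^{\circ n}$, that is, $\beta=\beta_{f^{\circ n}}=-[x^{d^n-1}]f^{\circ n}/(d^{n}\,[x^{d^n}]f^{\circ n})$. Because $f^{\circ n}\in K[x]$, this expression lies in $K$, so $\beta\in K$ and $\ell\in\pgl_2(K)$. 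Finally $f_1^{\circ n}=f^{\circ n}(x+\beta)-\beta\in K[x]$, since $f^{\circ n}\in K[x]$ and $\beta\in K$, so $f_1$ is the required $K$-linear conjugate.

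The only step carrying real content is fact (ii): the recipe ``kill the second-leading coefficient'' is dynamically natural, in the sense that the translation depressing $f$ already depresses all of its iterates. I expect this to be the crux; once it is in place, the descent of $\beta$ to $K$ is immediate from the hypothesis $f^{\circ n}\in K[x]$, and the rest is bookkeeping with affine conjugations.
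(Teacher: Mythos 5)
Your argument for $d\ge 2$ is correct and goes by a genuinely different route than the paper's. The paper tracks the two leading coefficients $\alpha_n,\beta_n$ of $f^{\circ n}$ through the recursion $\alpha_n=\alpha_1\alpha_{n-1}^d$, $\beta_n=\alpha_1 d\alpha_{n-1}^{d-1}\beta_{n-1}$, arriving at $\beta_n/\alpha_n=d^{\,n-1}\,a_{d-1}/a_d\in K$ by brute force. You instead isolate the two structural facts --- the depressing translation of a polynomial of degree $\ge 2$ is unique, and depressed polynomials of degree $\ge 2$ are closed under composition --- and from them deduce at once that the translation depressing $f$ must equal the one depressing $f^{\circ n}$, hence lies in $K$. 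Both approaches produce the same $\ell(x)=x-a_{d-1}/(d\,a_d)$, but your version makes the mechanism transparent and avoids the coefficient bookkeeping. Your verification of closure of depressed polynomials under composition is correct: each factor in $q^{e_1}$ drops degree by $0$ or by $\ge 2$, so the total degree can never drop by exactly $1$.

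There is, however, a genuine gap: you write ``We may assume $d\ge 2$'' with no justification, but the lemma is stated for $d\ge 1$ and the degree-one case is needed downstream (Theorem~\ref{thm:main} is stated for all $d\ge 1$). Your machinery does not extend to $d=1$. The uniqueness formula $\beta_h=-[x^{e-1}]h/(e\,[x^e]h)$ is derived by ignoring the trailing $-\beta$ in $h(x+\beta)-\beta$, which is legitimate only when $e-1>0$; for $e=1$ the $-\beta$ lands exactly in degree $e-1=0$, and the correct depressing translation for $h(x)=\alpha x+\gamma$ is $-\gamma/(\alpha-1)$, not $-\gamma/\alpha$. In particular one first has to rule out $\alpha=1$ (which the hypothesis $f\notin K[x]$ does), and then the ``$\beta_f=\beta_{f^{\circ n}}$'' matching reads $-\gamma/(\alpha-1)=-\bigl(\tfrac{\alpha^n-1}{\alpha-1}\gamma\bigr)/(\alpha^n-1)$, which is an identity rather than a descent step, so one must argue directly from the coefficients of $f^{\circ n}(x)=\alpha^n x+\tfrac{\alpha^n-1}{\alpha-1}\gamma$ that $\gamma/(\alpha-1)\in K$. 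The paper carries this out as a separate case, and your proof needs the same.
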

\begin{proof}

We begin with the case $d=1$. We denote $f(x)=\alpha x+\beta$. First we note that $\alpha\ne 1$; indeed, if $\alpha=1$ then $f^{\circ n}(x)=x+n\beta$, implying $\beta\in{K}$, and thus $f\in{K[x]}$, contradiction. Now, 
\begin{align*}
    f^{\circ n}(x) &= \alpha ^nx+(1+\alpha+\alpha^2+\ldots+\alpha^{n-1})\beta \\
    &= \alpha^nx+\frac{\alpha^n-1}{\alpha-1}\beta.
\end{align*}
From this we deduce that $\alpha^n, \frac{\beta}{\alpha-1}\in K$. Define $\ell(x) = x-\frac{\beta}{\alpha-1} \in K[x]$. Then
$f^\ell(x)=\alpha x$, and $(f^\ell)^{\circ n} = \alpha^nx\in K[x]$ as required.

Now assume $d\ge{2}$. We denote
\[f=\alpha_1x^d+\beta_1x^{d-1}+...,\]
and the $n$-th iterate of $f$ by
\[f^{\circ{n}}=\alpha_nx^{d^n}+\beta_nx^{d^n-1}+....\]
It is now easy to see that we have the following two recursive conditions on $\alpha_n,\beta_n$ for $n\ge{2}$:
\[
\begin{cases}
\alpha_n = \alpha_1(\alpha_{n-1})^d, \\
\beta_n = \alpha_1d\alpha_{n-1}^{d-1}\beta_{n-1}.
\end{cases}
\]
From the first condition we obtain $\alpha_n = \alpha_1^{S_n(d)}$, and from this we get:
\begin{align*}
\beta_n &= \alpha_1d\alpha_1^{(d-1)S_{n-1}(d)}\beta_{n-1} \\
&= d\alpha_1^{d^{n-1}}\beta_{n-1} \\
&= d^{n-1}\alpha_1^{S_n(d)-1}\beta_1 \\
&= d^{n-1}\alpha_1^{S_n(d)-1}\beta_1.
\end{align*}
So, now if $\alpha_n,\beta_n\in{K}$ then so is $\frac{\beta_n}{\alpha_n}=d^{n-1}\frac{\beta_1}{\alpha_1}$, which implies $\frac{\beta_1}{\alpha_1}=\frac{a_{d-1}}{a_d} \in {K}$.
If we set $\ell(x)=x-\frac{a_{d-1}}{d a_d}$ and $f_1=\ell^{-1}\circ f\circ \ell$ one can easily check that $f_1$ is missing the term in $x^{d-1}$ and $f_1^{\circ n}\in K[x]$ by Proposition~\ref{prop:conj}\eqref{prop:conj-item2}.

\end{proof}

\begin{lemma}\label{lem:main-lemma2}
 Let $f=\sum_{i=0}^d{a_ix^i}\quad (a_d\neq 0),g=\sum_{i=0}^d{b_ix^i}$ be polynomials in ${\bar{K}[x]}$ with $d\ge 1$, with $a_{d-1}=b_{d-1}=0$. Suppose that $g=f\circ{\ell}$, where $\ell(x)\in\bar{K}[x]$ is linear, i.e., $\ell(x)=\alpha{x}+\beta$ with $\alpha\ne{0}$, then $\beta=0$.
\end{lemma}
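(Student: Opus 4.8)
The plan is to compare the coefficient of $x^{d-1}$ on the two sides of the identity $g = f\circ\ell$. Writing $\ell(x)=\alpha x+\beta$, I would expand
\[
g(x)=f(\alpha x+\beta)=\sum_{i=0}^{d}a_i(\alpha x+\beta)^i
\]
and keep track of which summands can contribute a monomial in $x^{d-1}$. Every summand $a_i(\alpha x+\beta)^i$ with $i\le d-2$ has degree at most $d-2$, so it is irrelevant; thus only $a_d(\alpha x+\beta)^d$ and $a_{d-1}(\alpha x+\beta)^{d-1}$ matter. By hypothesis $a_{d-1}=0$, so the second of these drops out, and the binomial theorem gives that the coefficient of $x^{d-1}$ in $g$ is exactly $\binom{d}{1}a_d\alpha^{d-1}\beta = d\,a_d\,\alpha^{d-1}\beta$. (When $d=1$ this reads $a_1\beta$, i.e. the constant term of $g$, and the argument is identical.)

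On the other hand, the coefficient of $x^{d-1}$ in $g$ is $b_{d-1}$, which equals $0$ by hypothesis. Hence $d\,a_d\,\alpha^{d-1}\beta=0$. Since $K$ is a number field we are in characteristic zero, so $d\neq 0$; moreover $a_d\neq 0$ by assumption, and $\alpha\neq 0$ because $\ell$ is a genuine linear polynomial. Dividing by $d\,a_d\,\alpha^{d-1}$ yields $\beta=0$, which is the claim.

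There is essentially no obstacle in this argument; the only points needing a moment's care are the bookkeeping that no lower-degree summand $a_i(\alpha x+\beta)^i$ with $i<d-1$ pollutes the $x^{d-1}$-coefficient, and the observation that the vanishing of $a_{d-1}$ is precisely what isolates the single term $d\,a_d\,\alpha^{d-1}\beta$. The characteristic-zero hypothesis, implicit in "$K$ a number field", is exactly what licenses dividing by $d$.
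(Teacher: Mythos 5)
Your argument is correct and matches the paper's proof: both compare the coefficient of $x^{d-1}$ in $g=f\circ\ell$, note that only the $a_d(\alpha x+\beta)^d$ term contributes once $a_{d-1}=0$, and conclude $d\,a_d\,\alpha^{d-1}\beta=0$ forces $\beta=0$. Your remark about characteristic zero is a worthwhile detail the paper leaves implicit.
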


\begin{proof}
We have 
\[g = f\circ{\ell} = a_d(\alpha{x}+\beta)^d+a_{d-2}(\alpha{x}+\beta)^{d-2}+\ldots+a_0.\]
We see that only the first summand contributes to the term $x^{d-1}$. Therefore
\[0 = b_{d-1} = d{a_d}\alpha^{d-1}\beta,\]
so that $\beta$ must be $0$.
\end{proof}

\begin{lemma} \label{lem:main-lemma3}
 Let $f=\sum_{i=0}^d{a_ix^i}\quad (a_d\neq 0),g=\sum_{i=0}^d{b_ix^i}$ be polynomials in ${\bar{K}[x]}$ with $d\ge 1$, with $f^{\circ{n}}=g^{\circ{n}}$. Suppose that $g=\ell\circ f$, where $\ell(x)=\alpha{x}$ with $\alpha\in\bar{K}$, then $\alpha^{S_n(d)} = 1$ (so $\alpha$ is a $S_n(d)$-th root of unity).
\end{lemma}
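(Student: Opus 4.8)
The plan is to compare leading coefficients of $f^{\circ n}$ and $g^{\circ n}$, which is all the extra structure we need. Write $f = a_d x^d + a_{d-1}x^{d-1}+\cdots$ and $f^{\circ n}=\alpha_n x^{d^n}+\cdots$. Exactly as in the proof of Lemma~\ref{lem:quot-of-two-leading}, the leading coefficients obey the recursion $\alpha_n = a_d\,\alpha_{n-1}^{\,d}$ with $\alpha_1 = a_d$, which unwinds to $\alpha_n = a_d^{\,S_n(d)}$. This identity holds for every $d\ge 1$; in the degree-one case it reads $\alpha_n = a_1^{\,n}$, consistent with $S_n(1)=n$. So the first step is simply to record this closed form for the leading coefficient of an $n$-th iterate.

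Next I would apply the same formula to $g$ in place of $f$. Since $g=\ell\circ f=\alpha f$, the polynomial $g$ has degree $d$ with leading coefficient $\alpha a_d$ (here $\alpha\ne 0$: if $\alpha=0$ then $g$, and hence $g^{\circ n}$, would be constant, contradicting $\deg f^{\circ n}=d^n\ge 1$). Therefore the leading coefficient of $g^{\circ n}$ equals $(\alpha a_d)^{S_n(d)}=\alpha^{S_n(d)}a_d^{\,S_n(d)}$.

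Finally, the hypothesis $f^{\circ n}=g^{\circ n}$ forces the two leading coefficients to agree, i.e. $a_d^{\,S_n(d)}=\alpha^{S_n(d)}a_d^{\,S_n(d)}$; dividing by $a_d^{\,S_n(d)}\ne 0$ gives $\alpha^{S_n(d)}=1$, which is the claim.

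I do not expect a real obstacle here: the only points requiring care are that the leading-coefficient recursion is precisely the one already derived in the proof of Lemma~\ref{lem:quot-of-two-leading} (so it can be quoted verbatim), that this recursion is valid in the edge case $d=1$, and that $\alpha$ cannot vanish. No information about the lower-order coefficients of $f$, or any analogue of the $\beta$-term analysis in Lemmas~\ref{lem:main-lemma2}, is needed, precisely because $\ell$ is here a linear map with no constant term.
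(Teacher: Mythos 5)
Your proof is correct and follows essentially the same route as the paper: compute the leading coefficient of the $n$-th iterate as $a_d^{S_n(d)}$ (quoting the recursion from Lemma~\ref{lem:quot-of-two-leading}), do the same for $g$ noting its leading coefficient is $\alpha a_d$, and cancel. You are in fact a bit more careful than the paper, which only says ``Suppose $d\ge 2$'' and never returns to $d=1$; your observation that the closed form reads $\alpha_n=a_1^{\,n}=a_1^{\,S_n(1)}$ covers that edge case, and your remark that $\alpha\neq 0$ (else $g^{\circ n}$ would be constant) is a sensible sanity check the paper leaves implicit.
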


\begin{proof}
Suppose $d\ge{2}$. By the proof of Lemma~\ref{lem:quot-of-two-leading}, the leading coefficients of $f^{\circ{n}}$ and $g^{\circ{n}}$ are $a_d^{S_n(d)}$ and $b_d^{S_n(d)}$, respectively, so that by the assumption $f^{\circ{n}}=g^{\circ{n}}$ we have $a_d^{S_n(d)}=b_d^{S_n(d)}$. But by the assumption $g=\ell\circ f$, we know that $b_d=\alpha a_d$, so that we get $\alpha^{S_n(d)}=1$. Thus $\alpha$ must be a $S_n(d)$-th root of unity.
\end{proof}

\begin{lemma} \label{lem:nth-iterate}
Let $f(x)=ax^kg(x^t)$ with $g\in{K[x]}$ (resp.\  $g\in{K(x)}$), with $a^t\in{K}$ for some positive integers $k$ and $t$. Let $n\ge{1}$, then
\[f^{\circ n}(x)=a^{S_n(k)}x^{k^n}g_n(x^t),\]
with $g_n\in{K[x]}$ (resp.\  $g_n\in{K(x)}$).
\end{lemma}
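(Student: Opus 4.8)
The plan is a straightforward induction on $n$. For the base case $n=1$, we have $f^{\circ 1}=f=ax^kg(x^t)$; since $S_1(k)=k^0=1$ and $k^1=k$, this is already of the claimed form, with $g_1=g\in K[x]$ (resp.\ $K(x)$).

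For the inductive step, assume $f^{\circ n}(x)=a^{S_n(k)}x^{k^n}g_n(x^t)$ with $g_n\in K[x]$ (resp.\ $K(x)$). I would use the relation $f^{\circ(n+1)}=f^{\circ n}\circ f$ and substitute $f(x)=ax^kg(x^t)$ for $x$ in the formula for $f^{\circ n}$. Abbreviating $y=x^t$, the two ingredients that appear are
\[
f(x)^{k^n}=a^{k^n}x^{k^{n+1}}g(y)^{k^n}\qquad\text{and}\qquad f(x)^t=a^t y^k g(y)^t .
\]
The crucial observation is that $f(x)^t$ lies in $K[y]$ (resp.\ $K(y)$), precisely because $a^t\in K$ and $g$ is defined over $K$; consequently $g_n\bigl(f(x)^t\bigr)$ is again a polynomial (resp.\ rational function) in $y=x^t$ with coefficients in $K$. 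Putting the pieces together,
\begin{align*}
f^{\circ(n+1)}(x)&=a^{S_n(k)}f(x)^{k^n}g_n\!\bigl(f(x)^t\bigr)\\
&=a^{\,S_n(k)+k^n}\,x^{k^{n+1}}\Bigl(g(y)^{k^n}g_n\bigl(a^t y^k g(y)^t\bigr)\Bigr).
\end{align*}

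It then remains to match this with the asserted shape. The elementary identity $S_n(k)+k^n=\sum_{i=0}^{n}k^i=S_{n+1}(k)$ puts the exponent of $a$ into the right form, and setting $g_{n+1}(Y):=g(Y)^{k^n}g_n\bigl(a^t Y^k g(Y)^t\bigr)$ --- which is a polynomial (resp.\ rational function) with coefficients in $K$, being built from $g$, $g_n$ and $a^t$ by substitution and multiplication --- we obtain $f^{\circ(n+1)}(x)=a^{S_{n+1}(k)}x^{k^{n+1}}g_{n+1}(x^t)$, completing the induction.

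There is no genuine obstacle in this argument; the only point requiring attention is the bookkeeping, namely keeping the exponent of $a$ in the form $S_n(k)$ via the identity above, and checking that substituting $f$ on the inside preserves the structural shape ``a constant in $K$, times a power of $x$, times a function of $x^t$ defined over $K$''. The hypothesis $a^t\in K$ enters exactly at this last point, and it is essential: it is what makes $f(x)^t$ a function of $x^t$ over $K$, so that composing with $g_n$ introduces no coefficients outside $K$.
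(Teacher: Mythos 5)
Your proof is correct and proceeds by the same straightforward induction as the paper; the only cosmetic difference is that you expand $f^{\circ(n+1)}=f^{\circ n}\circ f$ (so the identity $S_{n+1}(k)=S_n(k)+k^n$ appears) whereas the paper expands $f^{\circ(n+1)}=f\circ f^{\circ n}$ (using $S_{n+1}(k)=1+kS_n(k)$), leading to a slightly different but equally valid formula for $g_{n+1}$.
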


\begin{proof}
By induction on $n$. The case $n=1$ is trivial (define $g_1=g$). 

\begin{align*}
f^{\circ{n+1}}(x) &= f(f^{\circ{n}})(x) \\
&= f(a^{S_n(k)}x^{k^n}g_n(x^t)) \\
&= a^{S_{n+1}(k)}x^{k^{n+1}}g_n(x^t)^kg(a^{S_n(k)t}x^{t k^n}g_n(x^{t})^t)
\end{align*}
(in the last equality we used the identity $S_{n+1}(k)=1+kS_n(k)$).
Now define 
\[g_{n+1}(x)=  g_n(x)^kg(a^{S_n(k)t}x^{k^n}g_n(x)^{t})\]
which is in $K[x]$ (resp.\  ${K(x)}$) because $a^{t}\in{K}$ and $g,g_n\in{K[x]}$ (resp.\  $g,g_n\in{K(x)}$) by the inductive hypothesis.
\end{proof}

\begin{proof}[Proof of Theorem~\ref{thm:main}]
Let $f(x)=\sum_{i=0}^d{a_ix^i}\in \bar{K}[x] \setminus K[x]$ be a polynomial of degree $d\ge 1$.

In one direction, suppose that for $\ell(x)=\alpha{x}+\beta \in K[x]$ we have
$f^\ell = \tilde{f}(x)=ax^{k}g(x^{t})$,
where $g(x) \in {K[x]}$, $k$ and $t$ are positive integers such that $t|S_n(k)$ and $a^t\in{K}$. 
By Lemma~\ref{lem:nth-iterate}, we get $(\tilde{f})^{\circ n}\in K[x]$. Now $f^{\circ n} = (\tilde{f}^{\circ n})^{\ell^{-1}} \in K[x]$, so that $f\in{A_n(K)}$.

In the opposite direction, assume $f\in A_n(K)$. By Lemma~\ref{lem:quot-of-two-leading} (up to conjugation by an element of $\pgl_2(K)$) we can assume $a_{d-1}=0$.
Let $G=\gal(\bar{K}/K)$. Since $K(f)\ne{K}$ (we assumed $f\notin K[x]$) there exists $\sigma\in{G}$ such that $f^\sigma\ne{f}$. However, since $f^{\circ{n}}\in{K[x]}$ we have
\[f\circ{\dotsb}\circ{f} = f^\sigma\circ\dotsb\circ f^\sigma.\]
By Lemma~\ref{lem:zieve-muller2} there exist linear polynomials $\ell_1(x)=\alpha_1{x}+\beta_1$ and $\ell_2(x)=\alpha_2{x}+\beta_2$ with $\alpha_1,\alpha_2,\beta_1,\beta_2\in\bar{K}$ such that
\begin{align}
\label{eq:right-ell} f&=f^\sigma\circ{\ell_1}, \\ 
\label{eq:left-ell} f&=\ell_2\circ{f^\sigma}.
\end{align}
Applying Lemma~\ref{lem:main-lemma2} to \eqref{eq:right-ell} we obtain $\beta_1=0$, which implies, by comparing constant coefficients in \eqref{eq:right-ell}, that $a_0^\sigma=a_0$. This, together with \eqref{eq:left-ell} implies that $\beta_2=0$. By Lemma~\ref{lem:main-lemma3}, we get $\alpha_1$ is a root of unity; let $s$ be the exact order of $\alpha_1$.

By combining \eqref{eq:right-ell} and \eqref{eq:left-ell} we get 
\[f=\ell_2\circ{f}\circ{\ell_1^{-1}}.\]
Comparing coefficients in degree $0\le{i}\le{d}$ we get 
\[a_i=\alpha_2 a_i \alpha_1^{-i}.\]
This implies that $\alpha_2$ is an $s$th root of unity, and that the indices $i$ such that $a_i\ne{0}$ belong to an arithmetic progression modulo $s$. If we change the Galois embedding $\sigma$ we will get a different value for $s$, therefore we can write 
\[f(x)=x^kg(x^t),\]
where $t$ is the least common multiple of the values for $s$ as $\sigma$ ranges among all Galois embeddings of $K(f)$ into $\bar{K}$,  $0\le{k}<t$ and $g\in{K(f)[x]}$. 

Now write $g(x)=\sum_{j=0}^m{b_jx^{j}}$, where $b_0,\ldots,b_m\in{K(f)}$, and $b_m \ne 0$.
By \eqref{eq:left-ell} we get
\[{b_j}=\alpha_2{\sigma(b_j)}, \quad j=0,...,m.\]
When $b_j\ne{0}$ this implies
\[\frac{b_j}{\sigma(b_j)}=\alpha_2, \quad j=0,...,m.\]
Therefore for any $i,j\in{0,...,m}$ with $b_ib_j\ne{0}$ we have 
\[\frac{\sigma(b_j)}{b_j}=\frac{\sigma(b_i)}{b_i},\]
or
\begin{equation} \label{eq:ratios}
\frac{b_i}{b_j}=\frac{\sigma(b_i)}{\sigma(b_j)}=\sigma\left(\frac{b_i}{b_j}\right).
\end{equation}
This holds for every $\sigma$ and shows that for any $i,j\in{0,...,m}$ with $b_ib_j\ne{0}$ we have $\frac{b_i}{b_j}\in{K}$. Therefore,
\[g(x)=b_m\tilde{g}(x),\]
where $\tilde{g}(x)\in{K[x]}$, and so
\[f(x)=b_mx^{k}\tilde{g}(x^t),\]
and $K(f)=K(b_m)$. 

For a fixed $\sigma$, we have $\sigma(b_1^s)=(\alpha_2^{-1} b_m)^s=b_m^s,$ therefore $b_m^t\in{K}$ so that $K(f)$ is a simple radical extension of $K$. According to Lemma~\ref{lem:nth-iterate}, the leading coefficient of the $n$th iterate of $f$ is $b_m^{S_n(k)}$; for it to be in $K$ implies that $t$ divides $S_n(k)$. 

As for the extension degree $[K(f):K]$, of course if $\tilde{f}$ is not monomial we have $[K(f):K] = [K(b_1):K] \le t \le d$. Otherwise, in the worst case we have $[K(b_1):K] \le S_n(k) \le S_n(d)$.
\end{proof}

\begin{proof}[Proof of Theorem~\ref{thm:mainB}]
First, we show that $t$ and $k$ are coprime if and only if there exists a positive integer $n$ such that $t|S_n(k)$. If $t|S_n(k)$ for some positive integer $n$ then
\[S_n(k)=1+\ldots+k^{n-1} \equiv 0 \pmod{t}\]
It is clear that $n>1$ since otherwise we get $S_1(k)=1\equiv 0 \pmod{t}$, contradiction. Isolating $1$, we get
\[1 \equiv k(-1-\ldots-k^{n-2}) \pmod{t}.\]
This implies that $k$ is invertible mod $t$, so that $k$ and $t$ are coprime.
In the other direction, suppose $k$ and $t$ are coprime. If $k\equiv 1 \pmod{t}$ then clearly $S_t(k)\equiv 0 \pmod{t}$ so that $t|S_t(k)$.
If $k\not\equiv 1 \pmod{t}$ then $t$ divides $\frac{k^n-1}{k-1}=S_n (k)$ for $n$ equal to the multiplicative order of $k$ modulo $t(k-1)$.

Now assume that $f\in A(K)$, and let $n$ be the least positive integer such that $f\in A_n(K)$. By Theorem \ref{thm:main} $f$ is $K$-linearly conjugate to $ax^{k}g(x^{t})$ with
$g(x) \in {K[x]}$, $t|S_n(K)$ and $a^t\in K$; as shown, $t$ and $k$ are coprime, and we are done.

For the converse, assume that $f$ is conjugate to $ax^{k}g(x^{t})$ with $g(x) \in {K[x]}$, $k$ and $t$ coprime and $a^t\in K$. Then there is a positive integer $n$ such that $t|S_n(k)$, and therefore by Theorem \ref{thm:main} $f\in A_n(K)\subseteq A(K)$.
\end{proof}

\section{Examples of rational functions} \label{sec:examples}

In this section we present a family of rational functions in $A_n(\mathbb{Q})\setminus B(\mathbb{Q})$ based on the group law on $S^1$.

We begin with a lemma which is useful to show that some functions are not in $B(\mathbb{Q})$.

\begin{lemma} \label{lem:periodic}
Let $f(x)=ax^{k}g(x^{t})$ where $k$ and $t$ are positive coprime integers, $g\in{K(x)}$, $a\notin K$ and $a^t\in{K}$. Then $f(\{0,\infty\})\subseteq\{0,\infty\}$.
Furthermore, every function in $B(K)$ has a $K$-rational periodic point of period $1$ or two $K$-rational periodic points of period $2$ which form a $2$-cycle. 
\end{lemma}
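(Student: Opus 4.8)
The plan is to prove the two assertions separately. For the first, I would analyze the behavior of $f(x)=ax^{k}g(x^{t})$ at $0$ and $\infty$ directly. Write $g(x) = c\,\tilde g(x)/\tilde h(x)$ with $\tilde g,\tilde h \in K[x]$ monic coprime and $c \in K$; since $g\in K(x)$, the zeros and poles of $f$ other than $0,\infty$ come from the roots of $\tilde g(x^t)$ and $\tilde h(x^t)$. At $x=0$ the function $f$ takes the value $0$, $\infty$, or $a\cdot(\text{leading/trailing behavior of }g)$; in every case the value is of the form $a^{e}\beta$ for an integer $e \equiv k \pmod t$ (coming from the $x^k$ factor and the order of vanishing of $g(x^t)$ at $0$, which is a multiple of $t$) and some $\beta\in K$. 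Since $a^t\in K$, such a value lies in $K$ only if $a^{e}\in K$, i.e. only if $t\mid e$; but $e\equiv k\pmod t$ and $\gcd(k,t)=1$ with $t>1$ (as $a\notin K$ forces $t\ge 2$), so $a^e\notin K$. Hence $f(0)\notin K$ unless $f(0)\in\{0,\infty\}$. The same argument applies at $\infty$ using the substitution $x\mapsto 1/x$ and the symmetric shape of $f$. Therefore $f(\{0,\infty\})\subseteq\{0,\infty\}$.

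For the second assertion, I would use the first part together with the fact that $\{0,\infty\}$ being forward-invariant under $f$ means $f$ permutes the two-element set $\{0,\infty\}$. If $f$ fixes one of them, that point is a $K$-rational fixed point, hence a $K$-rational periodic point of period $1$. If $f$ swaps them, then $\{0,\infty\}$ is a $2$-cycle consisting of two $K$-rational points. Now take an arbitrary $h\in B(K)$: by definition $h = \tilde f^{\,\ell}$ for some $\ell\in\pgl_2(K)$ and some $\tilde f(x) = ax^kg(x^t)$ with $k,t$ coprime, $g\in K(x)$, $a^t\in K$. If $a\in K$ then $\tilde f\in K(x)$ already and $0$ is a $K$-rational point whose orbit lies in $K$; more simply, $\tilde f(\{0,\infty\})\subseteq\{0,\infty\}$ trivially in that case too (the computation above is vacuous but the inclusion still holds since $x^kg(x^t)$ still sends $0$ and $\infty$ into $\{0,\infty\}$ unless $g$ has a nonzero finite value there, which is in $K$ — I should double-check this edge case and, if needed, note that $h$ then has a $K$-rational fixed point from elsewhere, or restrict the claim's force to the genuinely non-trivial twists). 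In the main case $a\notin K$, the first part gives that $\{0,\infty\}$ is either a fixed point or a $2$-cycle for $\tilde f$, and applying $\ell\in\pgl_2(K)$ transports this to a $K$-rational fixed point or $K$-rational $2$-cycle for $h$, since $\ell$ is defined over $K$ and conjugation preserves periods.

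The main obstacle I anticipate is the careful bookkeeping in the first part: one must handle all the cases for what $f$ does at $0$ and $\infty$ — namely whether $g$ vanishes, has a pole, or takes a nonzero finite value at $0$ (and likewise at $\infty$) — and in each case correctly compute the residue class mod $t$ of the exponent of $a$ appearing in $f(0)$, then invoke $\gcd(k,t)=1$ and $t\ge 2$ to conclude $a^{e}\notin K$. A secondary subtlety is the degenerate case $a\in K$ in the second assertion, which I would either fold into the statement's hypotheses or dispatch with a short separate remark; the cleanest route is to observe that the hypothesis $a\notin K$ in the first part is exactly what makes the dichotomy sharp, and for the $B(K)$ statement one either is in that case or $\tilde f$ already lies in $K(x)$ and $\{0,\infty\}$ is still forward-invariant by the same (now trivial) exponent computation.
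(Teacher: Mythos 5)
Your plan for the second assertion (conjugation by $\ell\in\pgl_2(K)$ transports the forward-invariance of $\{0,\infty\}$ to a $K$-rational fixed point or $2$-cycle of $h$) matches the paper, but the argument you give for the first assertion $f(\{0,\infty\})\subseteq\{0,\infty\}$ does not actually prove it, and this is where the gap lies.

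You analyze $f(0)$ and conclude ``$f(0)\notin K$ unless $f(0)\in\{0,\infty\}$,'' then deduce from this that $f(\{0,\infty\})\subseteq\{0,\infty\}$. That deduction is a non sequitur: knowing that $f(0)$, if it is a finite nonzero value, lies outside $K$ does not force it into $\{0,\infty\}$. Nothing in the lemma guarantees $f(0)\in K$, so your disjunction is strictly weaker than what is claimed. There is also a factual slip feeding into this: you assert that a finite nonzero value $f(0)$ has the form $a^{e}\beta$ with $e\equiv k\pmod t$. In fact $a$ occurs to the first power in $f$, so if $f(0)$ is finite and nonzero it equals $a$ times a quantity in $K$, i.e.\ $e=1$. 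You have conflated the exponent of $a$ with the local order of $f$ at $0$.

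The quantity that does carry the congruence $\equiv k\pmod t$ is precisely that local order, and it is what the paper uses. Writing $g=p/q$ with $p,q\in K[x]$ coprime, the order of $f=ax^{k}p(x^{t})/q(x^{t})$ at $x=0$ is $k-t\cdot\mathrm{ord}_{0}(q)$, and its order at $\infty$ is $k+t\deg p-t\deg q$; both are $\equiv k\pmod t$, hence nonzero once $\gcd(k,t)=1$ and $t>1$ (the latter forced by $a\notin K$, $a^t\in K$). A nonzero local order at $0$ and at $\infty$ is exactly the statement $f(0),f(\infty)\in\{0,\infty\}$. So the order-of-vanishing observation you make parenthetically is the entire proof; the detour through ``$f(0)\notin K$'' should be dropped. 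Your subsequent handling of the $a\in K$ edge case in $B(K)$ is hand-wavy for a real reason --- when $t=1$ the order computation can vanish and the set $\{0,\infty\}$ need not be invariant --- but this ambiguity is inherited from the paper's definition of $B(K)$ and the paper's own proof tacitly assumes $a\notin K$ as well, so I would not count that against you.
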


\begin{proof}
Write $g(x)=p(x)/q(x)$ with $p(x),q(x)\in K[x]$ coprime (in this case the polynomials $p(x^t),q(x^t)$ are coprime as well).

Observe that either $f(\infty)=0$, if $t\deg q > k+ t \deg p$, or  $f(\infty)=\infty$, if $t\deg q < k+ t \deg p$; in particular $t\deg q \neq k+ t \deg p$ because $k$ and $t$ are coprime and $t>1$ because $a\not\in K$.

Similarly, either $f(0)=0$ if $k$ is smaller than $t$ times the multiplicity of $q$ at $x=0$ or $f(0)=\infty$ otherwise.

Therefore, if neither 0 nor $\infty$ is a fixed point of $f$, they are swapped by $f$ and they are two $K$-rational periodic points of order 2.
 
The last conclusion is straightforward since conjugation by an element in $\pgl_2(K)$ induces a bijection of $K$-rational fixed points and periodic points of period $2$.
\end{proof}

We can now prove the claim stated in the introduction.

\begin{prop}\label{prop:counterexample}
Let $f(x)=\frac{cx^2-2x-c}{x^2+2cx-1},$ where $c = 2-\sqrt{3}$. Then $f(x)\in A_2(\mathbb{Q})$, but $f(x)\notin B(\mathbb{Q})$.
\end{prop}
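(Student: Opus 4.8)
The plan is to establish the two claims of Proposition~\ref{prop:counterexample} separately: first that $f \in A_2(\mathbb{Q})$, and second that $f \notin B(\mathbb{Q})$.

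For the membership $f \in A_2(\mathbb{Q})$, I would simply compute $f^{\circ 2}(x)$ directly. Since $c = 2-\sqrt{3}$ satisfies $c^2 - 4c + 1 = 0$ (equivalently $c^2 = 4c-1$, and note $c\cdot(2+\sqrt 3)=1$ so $1/c = 2+\sqrt 3$), the coefficients of $f$ live in $\mathbb{Q}(\sqrt 3)$, but the Galois conjugate $\bar f$ (replacing $c$ by $2+\sqrt 3$) should turn out to be linearly conjugate to $f$ in a way that makes $f^{\circ 2}$ defined over $\mathbb{Q}$. Concretely I would substitute and simplify $f(f(x))$, using the relation $c^2+1 = 4c$ repeatedly to clear the $\sqrt 3$ from every coefficient; the bookkeeping is routine but must be carried out carefully. (One can also motivate this: as the section heading suggests, $f$ is conjugate over $\mathbb{Q}(\sqrt3)$ to a Chebyshev-like map coming from the group law on $S^1$ / the doubling map on angles, where squaring the iterate corresponds to the rational $\mathbb{Q}$-structure; but for the proof a direct computation suffices.)

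For the non-membership $f \notin B(\mathbb{Q})$, I would invoke Lemma~\ref{lem:periodic}: every function in $B(\mathbb{Q})$ has either a $\mathbb{Q}$-rational fixed point or a $\mathbb{Q}$-rational $2$-cycle (a pair of $\mathbb{Q}$-rational points of period exactly $2$ swapped by the function). So it suffices to show that $f$ has no $\mathbb{Q}$-rational fixed point and no $\mathbb{Q}$-rational $2$-cycle. The fixed points are the roots of $cx^2 - 2x - c = x(x^2+2cx-1)$, i.e. of a cubic; I would check that this cubic (after simplification using $c^2 = 4c-1$) has no rational root — e.g. by reducing modulo a suitable prime or by the rational root test on the cleared-denominator integer polynomial. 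For the $2$-cycles: the periodic points of period dividing $2$ are roots of $f^{\circ 2}(x) = x$, and since $f^{\circ 2} \in \mathbb{Q}(x)$ this is an honest polynomial equation over $\mathbb{Q}$ of degree $4$; removing the fixed-point factor leaves a degree-$2$ (or lower) factor over $\mathbb{Q}$ whose roots are the period-$2$ points. A $\mathbb{Q}$-rational $2$-cycle exists iff this quadratic factor splits over $\mathbb{Q}$, so I would compute its discriminant and show it is not a square in $\mathbb{Q}$ (again, a reduction-mod-$p$ argument is the cleanest way to certify this).

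The main obstacle is the second claim: it is not enough to rule out $\mathbb{Q}$-rational \emph{fixed} points — one genuinely has to analyze the period-$2$ points as well, and this requires having $f^{\circ 2}$ explicitly in hand (which is why computing it cleanly in the first part pays off twice). The arithmetic of showing that the relevant cubic has no rational root and that the relevant quadratic has non-square discriminant is elementary but is exactly where a careless sign or a mis-simplification of $c^2 = 4c-1$ would break the argument, so I would double-check those two facts by an independent congruence computation modulo a small prime of good reduction.
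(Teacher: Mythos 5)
Your high-level strategy — compute $f^{\circ 2}$ directly to check it lies in $\mathbb{Q}(x)$, then invoke Lemma~\ref{lem:periodic} and verify that $f$ has no $\mathbb{Q}$-rational fixed point and no $\mathbb{Q}$-rational $2$-cycle — is exactly what the paper does, and the first half is fine. However, your concrete plan for the period-$2$ analysis contains a genuine gap. You propose to take the equation over $\mathbb{Q}$ coming from $f^{\circ 2}(x)=x$ (it has degree $5$, incidentally, not $4$), ``remove the fixed-point factor,'' and then test whether the leftover quadratic over $\mathbb{Q}$ splits by looking at its discriminant. This step does not go through, because $f$ itself is not defined over $\mathbb{Q}$: the fixed-point polynomial of $f$ is $(x^{2}+1)(x+c)$, which is \emph{not} a $\mathbb{Q}$-polynomial, so there is no ``fixed-point factor over $\mathbb{Q}$'' to divide out. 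The Galois automorphism $\sqrt{3}\mapsto-\sqrt{3}$ sends fixed points of $f$ to fixed points of $f^{\sigma}$, which are period-$2$ points of $f$, so the two sets of periodic points are mixed by Galois. Concretely, the numerator of $f^{\circ 2}(x)-x$ factors over $\mathbb{Q}$ as $(x-1)(x^{2}+1)(x^{2}+4x+1)$, and the quadratic $x^{2}+4x+1$ has one fixed point $-c=-2+\sqrt{3}$ and one exact-period-$2$ point $-1/c=-2-\sqrt{3}$ among its roots; it is not ``the period-$2$ quadratic.'' Its discriminant is $12$, which is indeed a non-square, but that would be a misleading certificate: $x=1$ \emph{is} a $\mathbb{Q}$-rational point of exact period $2$, and the true reason there is no $\mathbb{Q}$-rational $2$-cycle is that its cycle partner $f(1)=-1/c=-(2+\sqrt{3})$ is irrational.

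The fix is what the paper does: solve $f(x)=x$ directly to get the fixed points $-c,\pm i$ (none in $\mathbb{Q}$), identify the unique $2$-cycle as $\{1,-1/c\}$, and observe that only one of its two points is rational, so Lemma~\ref{lem:periodic} gives $f\notin B(\mathbb{Q})$. Equivalently and in your spirit: the only rational root of the degree-$5$ polynomial obtained by clearing denominators in $f^{\circ 2}(x)-x$ is $x=1$ (rational root test), and since $f(1)\neq 1$ and $f(1)\notin\mathbb{Q}$, that single rational point cannot furnish either a rational fixed point or a rational $2$-cycle. The moral worth internalizing is that when $f\notin\mathbb{Q}(x)$ but $f^{\circ 2}\in\mathbb{Q}(x)$, the dynatomic decomposition of $f^{\circ 2}(x)-x$ into fixed-point and period-$2$ parts need not be $\mathbb{Q}$-rational, even though the full polynomial is.
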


\begin{proof}
The second iterate of $f$ is
\[f^{\circ{2}}(x)=\frac{x^4 - 4x^3 - 6x^2 + 4x + 1}{x^4 + 4x^3 - 6x^2 - 4x + 1},\]
which is defined over $\mathbb{Q}$; this shows that $f\in{A_2(\mathbb{Q})}$.

We note that the fixed points (i.e., the periodic points of period $1$) of $f$ are $-c, \pm{i}$, none of which are defined over $\mathbb{Q}$, and the periodic points of (exact) period $2$ are $c$ and $1$, so that only one of them is $\mathbb{Q}$-rational. 

Assume by way of contradiction that $f$ is in $B(\mathbb{Q})$.
Then by Lemma~\ref{lem:periodic}, $f$ has a $\mathbb{Q}$-rational fixed point or two $\mathbb{Q}$-rational periodic points of period $2$, but this is not the case. We conclude that $f\notin B(\mathbb{Q})$ as required.
\end{proof}

This example can be better understood using the theory of twists of rational functions. A \emph{twist} of $f\in{K}(x)$ is an equivalence class $[g]_K$ for $g\in{K}(x)$ such that $[f]=[g]$ (i.e., $f$ and $g$ are $\bar{K}$-linearly conjugate, but not necessarily $K$-linearly conjugate); for any $f\in{K}(x)$, the class $[f]_K$ is called the \emph{trivial} twist. We abuse this terminology by calling a function $g$ as in the definition a twist of $f$. An \emph{automorphism} of $f\in\bar{K}(x)$ is an element $\ell\in \pgl_2(\bar{K})$ such that $f^\ell = f$. Twists are intimately related to automorphisms; in particular, if the automorphism group of a rational function $f\in{K(x)}$ is trivial, then $f$ has no non-trivial twists (See \cite{silverman1995} and \cite{silverman2007}*{Sections 4.7 to 4.9} for more details). It turns out that for the function $f$ in \eqref{eq:counterexample1}, the second iterate $f^{\circ{2}}$ is a non-trivial twist under conjugation by $\ell(x)=\frac{x+c}{cx-1}$ of 
\[g(x)=-\frac{4 {\left(x^{3} - x\right)}}{x^{4} - 6 x^{2} + 1}
\]
where $g$ itself is the second iterate of 
\[h(x)=-\frac{2x}{x^{2} - 1}\]
and $f=h^\ell$. The last identity shows that the field of moduli of $f$ is $\mathbb{Q}$.
It is clear that the function $h(x)$ is trivially in $A_2(\mathbb{Q})$ because it is defined over $\mathbb{Q}$, so the fact that $f\in A_2(\mathbb{Q})$ is a consequence of Proposition~\ref{prop:conj}\eqref{prop:conj-item5}.

\subsection{A family of functions based on rotations}
We now use the group law on $S^1$ to give an example of a \emph{family} of non-polynomial rational functions $f\in{\bar{\mathbb{Q}}(t)}$ such that $f\in{A_n(\mathbb{Q})}\setminus{B(\mathbb{Q})}$. 

Let $k\ge{1}$ be an integer, and let $\varphi\in\mathbb{R}$. Consider the self map of the unit circle $S^1$ defined by
\[ \Psi: (\cos\theta,\sin\theta)\mapsto (\cos(k\theta+\varphi),\sin(k\theta+\varphi)) \]

We can project this map down to a rational function on $\P_1$ in the following way: let $(x,y)=(\cos\theta,\sin\theta)$, and let $t$ be $\tan(\theta/2)$. Then we get $(x,y)=(\frac{1-t^2}{1+t^2},\frac{2t}{1+t^2})$, and $t=\frac{1-x}{y}$. Thus the map $\Psi$ descends to a map $f:\P_1\to\P_1$
\[
\begin{CD}
S^1 @>\Psi>> S^1\\
@VVV @VVV\\
\P_1 @>f>> \P_1
\end{CD}
\]
We show that the map $\Psi$ can be expressed as a rational function in $x$ and $y$, and thus $f$ as a rational function in $t$.

Recall the definition of the Chebyshev polynomials: the  $k$-th Chebyshev polynomial of the first kind is the polynomial $T_k$ such that $\cos(k\theta)=T_k(\cos\theta)$, while the  $k$-th Chebyshev polynomial of the second kind is the polynomial $U_k$ such that $\sin((k+1)\theta)=U_k(\cos\theta)\sin\theta$.

Then we have:
\[\cos(k\theta+\varphi)=T_k(x)\cdot\cos\varphi - U_{k-1}(x)\cdot{y}\cdot\sin\varphi,\]
and
\[\sin(k\theta+\varphi)=U_{k-1}(x)\cdot{y}\cdot\cos\varphi+T_k(x)\sin\varphi.\]

So that we get
\begin{equation}\label{eqn:rotation} f(t)=\frac{2tU_{k-1}(\frac{1-t^2}{1+t^2})\cdot\sin\varphi-(1+t^2)T_k(\frac{1-t^2}{1+t^2})\cdot\cos\varphi+(1+t^2)}{2tU_{k-1}(\frac{1-t^2}{1+t^2})\cdot\cos\varphi+(1+t^2)T_k(\frac{1-t^2}{1+t^2})\cdot\sin\varphi}.\end{equation}

The rational function $f$ acts on $\P_1$ (identified with $\mathbb{R}\cup \{\infty\}$)  by sending $t=\tan(\theta/2)$ to $f(t)=\tan(\frac{k\theta+\varphi}{2})$. 

We remark that the function \eqref{eq:counterexample1} is of this shape for $k=2,\varphi=\pi/6$.

We now consider only those $\varphi$ such that $\cos\varphi, \sin\varphi$ are algebraic numbers. Then the field of definition of $f$ is $\mathbb{Q}(\cos\varphi,\sin\varphi)$.

\begin{prop} \label{prop:family}
Fix $n,k\geq 2$ and let $\varphi=\frac{\pi}{S_n(k)}$.
Then the function $f$ is in $A_n(\mathbb{Q})\setminus (\mathbb{Q}(t) \cup B(\mathbb{Q}))$.
\end{prop}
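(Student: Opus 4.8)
## Proof strategy for Proposition 4.7

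The plan is to verify the three separate assertions: that $f \in A_n(\mathbb{Q})$, that $f \notin \mathbb{Q}(t)$, and that $f \notin B(\mathbb{Q})$. The first is the heart of the matter and follows from the interpretation of $f$ as a rotation of $S^1$. The key observation is that $f$ acts on $t = \tan(\theta/2)$ by $t \mapsto \tan\bigl(\tfrac{k\theta + \varphi}{2}\bigr)$, so the $n$-th iterate $f^{\circ n}$ sends $t = \tan(\theta/2)$ to $\tan\bigl(\tfrac{k^n \theta + (1 + k + \cdots + k^{n-1})\varphi}{2}\bigr) = \tan\bigl(\tfrac{k^n\theta + S_n(k)\varphi}{2}\bigr)$. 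With $\varphi = \pi/S_n(k)$ this angular offset is exactly $\pi$, and since $\tan\bigl(\tfrac{k^n\theta + \pi}{2}\bigr) = -\cot\bigl(\tfrac{k^n\theta}{2}\bigr)$ (or, more cleanly, rotation by $\varphi' = \pi$ on $S^1$ is the antipodal map $(x,y) \mapsto (-x,-y)$, which descends to $t \mapsto -1/t$ composed appropriately with the degree-$k^n$ Chebyshev-type map), $f^{\circ n}$ is a composition of maps all defined over $\mathbb{Q}$: the rational map underlying $z \mapsto z^{k^n}$ on the circle (expressible via $T_{k^n}, U_{k^n-1}$, hence over $\mathbb{Q}$) followed by the $\mathbb{Q}$-rational antipodal map. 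I would make this precise either by plugging $\varphi = \pi$ into formula \eqref{eqn:rotation} to read off that $\cos(k^n\theta + \pi) = -T_{k^n}(x)$, $\sin(k^n\theta + \pi) = -U_{k^n-1}(x)y$, so $f^{\circ n}(t)$ is a ratio of polynomials in $t$ with rational coefficients; or by invoking Proposition 3.1\eqref{prop:conj-item5} / the twist viewpoint as was done for the $k=2$, $n=2$ special case. Either way, $f^{\circ n} \in \mathbb{Q}(t)$, so $f \in A_n(\mathbb{Q})$.

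For $f \notin \mathbb{Q}(t)$: the field of definition of $f$ is $\mathbb{Q}(\cos\varphi, \sin\varphi)$ with $\varphi = \pi/S_n(k)$, and since $n, k \geq 2$ we have $S_n(k) = 1 + k + \cdots + k^{n-1} \geq 1 + k \geq 3$, so $\varphi = \pi/S_n(k)$ is a rational multiple of $\pi$ with denominator at least $3$; then $2\cos\varphi$ is a nonzero algebraic integer which is not rational (for $S_n(k) \geq 3$ the value $\cos(\pi/S_n(k))$ is irrational — this is the standard Niven-type fact, true for all integers $\geq 3$). Hence $\mathbb{Q}(f) \supsetneq \mathbb{Q}$ and $f \notin \mathbb{Q}(t)$. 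I should double-check the degenerate possibility that the formula \eqref{eqn:rotation} produces a function whose actual coefficient field is smaller than $\mathbb{Q}(\cos\varphi,\sin\varphi)$, but the paper already asserts the field of definition is exactly $\mathbb{Q}(\cos\varphi,\sin\varphi)$, so I will rely on that.

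For $f \notin B(\mathbb{Q})$: I would apply Lemma 4.2 (Lemma~\ref{lem:periodic}), which says every function in $B(\mathbb{Q})$ has a $\mathbb{Q}$-rational fixed point or a $\mathbb{Q}$-rational $2$-cycle. So it suffices to show $f$ has neither. The fixed points of $f$ correspond to angles $\theta$ with $k\theta + \varphi \equiv \theta \pmod{2\pi}$, i.e. $\theta \equiv \varphi/(1-k) \pmod{2\pi/(k-1)}$; translating through $t = \tan(\theta/2)$ and $t = \tfrac{1-x}{y}$ one sees the fixed points are the images of a finite set of circle points whose angular coordinates are rational multiples of $\pi$ with denominator divisible by $S_n(k)$ (times small factors), and for these to give a $\mathbb{Q}$-rational value of $t$ would force $\cos\theta \in \mathbb{Q}$ for such $\theta$, impossible by the same Niven argument. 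Likewise the period-$2$ points correspond to $k^2\theta + (k+1)\varphi \equiv \theta$, and I would check that $S_n(k) \nmid (k+1)(k-1) \cdot(\text{stuff})$ in the relevant sense so that again no rational $t$-coordinate arises, and moreover the two points of any genuine $2$-cycle are Galois-conjugate but not individually rational. The cleanest route is probably: show $f$ conjugate over $\overline{\mathbb{Q}}$ to the "multiplication by $k$" map on $S^1$ composed with a rotation of infinite-looking order, count its fixed/periodic points explicitly as roots of concrete polynomials, and verify irrationality of each coordinate. I expect the main obstacle here to be the bookkeeping: carefully identifying the fixed points and $2$-periodic points of $f$ via the circle parametrization and ruling out $\mathbb{Q}$-rationality uniformly in $n$ and $k$, rather than the (routine) $A_n(\mathbb{Q})$ part. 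A generalization of the explicit period-$2$ computation in Proposition~\ref{prop:counterexample} — where the periodic points were $c = 2-\sqrt3$ and $1$, exactly one rational — is the model to follow, and the asymmetry (one of the two points lands at $t = 1$ or $t=\infty$, an "obvious" rational point, while its partner does not) is what makes Lemma~\ref{lem:periodic} bite.
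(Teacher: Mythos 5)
Your approach is the same as the paper's: use the circle parametrization to show $f^{\circ n}$ corresponds to rotation by $S_n(k)\varphi = \pi$ (hence is defined over $\mathbb{Q}$), apply Niven's theorem to get $f\notin\mathbb{Q}(t)$, and apply Lemma~\ref{lem:periodic} to get $f\notin B(\mathbb{Q})$. The first two parts are fine. But the $B(\mathbb{Q})$ exclusion has a genuine gap that you flag but do not close, and your stated plan for closing it is not quite right.

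You suggest ``counting fixed/periodic points explicitly and verifying irrationality of each coordinate,'' i.e.\ ruling out $\mathbb{Q}$-rationality of every period-$\le 2$ point ``uniformly in $n$ and $k$.'' That is false: when $n=2$ and $k$ is even, the point $t=\infty$ \emph{is} a $\mathbb{Q}$-rational point of period $\le 2$. (Your closing sentence about one point landing at $t=1$ or $t=\infty$ shows you sense this, but it contradicts the plan you wrote two sentences earlier.) The actual argument requires a case analysis. Writing the period-$\le 2$ condition as $\theta \equiv -\tfrac{\varphi}{k-1} \pmod{\tfrac{2\pi}{k^2-1}}$ and invoking Niven again (rationality of $t=\tan(\theta/2)$ forces $\theta\in\tfrac{\pi}{2}\mathbb{Z}$) leads to the Diophantine condition
\[
-2(k+1)+4\,S_n(k)\,m=(k^2-1)\,S_n(k)\,h,
\]
which has no integer solutions for $n>2$ (since then $0<2(k+1)<S_n(k)$), no solutions for $n=2$, $k$ odd, and for $n=2$, $k$ even produces exactly the one solution $\theta\equiv\pi\pmod{2\pi}$, i.e.\ $t=\infty$. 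In that last case Lemma~\ref{lem:periodic} forces $\infty$ to be a fixed point (it cannot be part of a rational $2$-cycle, being the unique rational point of period $\le 2$), and one must explicitly compute $f(\infty)=\tan(\varphi/2)\neq\infty$ to finish. Without this case split and the final check at $\infty$, the proof is incomplete; you have correctly identified \emph{where} the work lies but not done it, and the heuristic you propose (``rule out every rational coordinate'') would in fact fail in the $n=2$, $k$ even case.
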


\begin{proof}
The field of definition of $f$ is $\mathbb{Q}(\cos\varphi,\sin\varphi)$. 

For $n,k\ge {2}$ we have $0<\varphi\le\pi/3$, so that by Niven's theorem~\cite{book:niven}*{Corollary 3.12} we get that $\mathbb{Q}(\cos\varphi,\sin\varphi)\ne\mathbb{Q}$ and  $f\not\in \mathbb{Q}(t)$. 

If we iterate the function $n$ times we get $f^{\circ n}(t)=\tan(\frac{k^n\theta+S_n(k)\varphi}{2})$, which is again of the shape \eqref{eqn:rotation} with $k^n, S_n(k)\varphi$ in place of $k,\varphi$. 
The field of definition of $f^{\circ n}$ is then $\mathbb{Q}(\cos(S_n(k)\varphi),\sin(S_n(k)\varphi))=\mathbb{Q}$, so $f\in A_n(\mathbb{Q})$.

We can use the same argument as in the proof of Proposition \ref{prop:counterexample} to show that $f\not\in B(\mathbb{Q})$.

The fixed points of $f^{\circ 2}$ (which consist of the periodic points of $f$ of period $1$ and $2$) are the values $\tan\frac{\theta}{2}$ such that $\tan\frac{\theta}{2}=\tan\frac{k^2\theta+(k+1)\varphi}{2}$. This happens if and only if
\begin{equation*}
    \frac{\theta}{2}\equiv \frac{k^2\theta+(k+1)\varphi}{2} \pmod{\pi}
\end{equation*}
 which corresponds to 
\[\theta\equiv -\frac{\varphi}{k-1} \equiv -\frac{\pi}{k^n-1} \pmod{\frac{2\pi}{k^2-1}}.\]
 By Niven's theorem, in order for $t=\tan\frac{\theta}{2}$ to be  $\mathbb{Q}$-rational $\theta$ must be an integral multiple of $\pi/2$.
 
In this case we would have integers $m,h$ such that
\begin{align*}
    -\frac{\pi}{k^n-1}+\frac{2\pi }{k^2-1}m&=\frac{\pi}{2}h.
\end{align*}
Dividing by $\pi$ and multiplying by $2(k+1)(k^n-1)$ we get 
\begin{align*}
    -2(k+1)+4S_n(k)m&=(k^2-1)S_n(k)h.
\end{align*}

This is not possible if $n>2$, because the equation implies that $2(k+1)$ is a nonzero multiple of $S_n(k)$ but in this case $2(k+1)<S_n(k)$. Thus Lemma~\ref{lem:periodic} shows that in this case $f\not\in B(\mathbb{Q})$.

Assume now that $n=2$; then the equation becomes
\begin{align*}
    m&=\frac{(k^2-1)h+2}{4}.
\end{align*}
This has no solution if $k$ is odd, while it has solutions
\[
m\equiv\frac{k^2}{2}\pmod{(k^2-1)}
\]
if $k$ is even.

This corresponds to the only value 
\[
\theta\equiv -\frac{\varphi}{k-1}+\frac{k^2}{2}\frac{2\pi}{k^2-1}\equiv \pi \pmod{2\pi}
\]
and so to the only point $t=\infty$.

Therefore according to Lemma~\ref{lem:periodic}, if $f\in B(\mathbb{Q})$ then $\infty$ must be a fixed point of $f$.
However one can check that
$$f(\infty)=f\left(\tan\frac{\pi}{2}\right)=\tan\left(\frac{k\pi+\varphi}{2}\right)=\tan\left(\frac{\varphi}{2}\right)\neq\infty\qquad \text{ if $k$ is even.}$$

This shows that in this case too $f\not\in B(\mathbb{Q})$.
\end{proof}

\subsection{Elliptic curves}

We can define a similar construction to the one presented in the previous subsection but on an elliptic curve instead of $S^1$. However, as we shall see we will be much more limited when it comes to projecting it to $\P_1$.

Let $E$ be an elliptic curve defined over a number field $K$ by a Weierstrass equation. For a fixed integer $d$ the multiplication by $d$ map, denoted by $[d]$, is an endomorphism of $E$ defined over $K$. As it commutes with the involution $[-1]$, it descends via the $x$ coordinate to a rational function $\Phi_d$ over $\P_1$ (defined over $\mathbb{Q}$). These two maps satisfy the following commutative diagram:
\[
\begin{CD}
E @>[d]>> E\\
@VxVV @VVxV\\
\P_1 @>\Phi_d>> \P_1
\end{CD}
\]
The rational function $\Phi_d$ is an example of a Lattès map (see \cite{article:milnor} for background on Lattès maps). 

Now fix positive integers $n,d\ge 2$ and take $Q \in E(\bar{K})$ an $m$-torsion point on the elliptic curve, where $m$ divides $S_n(d)$. Let us consider the map $\Psi: P\mapsto dP+Q$, which is defined over $\bar{K}$. It is clear that $\Psi^{\circ n}(P)=d^nP+S_n(d)Q=d^nP$ (compare with $f^{\circ n}(t)$ in Proposition~\ref{prop:family}), which is defined over $K$. Note that $\Psi$ commutes with $[-1]$ if and only if $Q$ is a $m$-torsion point, so that $\Psi$ descends to a rational function $f$ over $\P_1$ only for the case $m=2$. The maps $\Psi$ and $f$ satisfy the following commutative diagram:
\[
\begin{CD}
E @>\Psi>> E\\
@VxVV @VVxV\\
\P_1 @>f>> \P_1
\end{CD}
\]
It is now clear from the above that $f\in A_2(K)$.
\begin{prop} \label{prop:elliptic}
Let $E$ be an elliptic curve over $\Q$ defined by the equation $y^2=p(x)$, with $p$ an irreducible polynomial in $\Q[x]$. 
Fix an odd integer $d\geq 3$ and let $Q$ be a 2-torsion point.
Then the function $f$ is in $A_2(\mathbb{Q})\setminus (\mathbb{Q}(t) \cup B(\mathbb{Q}))$.
\end{prop}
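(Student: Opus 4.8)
The argument should closely mirror the proof of Proposition~\ref{prop:family}. First I would establish that $f\in A_2(\mathbb{Q})$: this is already noted in the text, since $\Psi^{\circ 2}(P)=d^2P+S_2(d)Q=d^2P$ (because $Q$ is a $2$-torsion point and $2\mid S_2(d)=d+1$ as $d$ is odd), and $[d^2]$ descends to $\Phi_{d^2}$, which is defined over $\mathbb{Q}$. Second, I would show $f\notin\mathbb{Q}(t)$: the field of definition of $f$ contains the $x$-coordinate of $Q$, which is a root of the irreducible polynomial $p(x)$ of degree $\deg p\ge 3>1$, so $f$ cannot be defined over $\mathbb{Q}$ (here one uses that distinct choices of $Q$ among the three nontrivial $2$-torsion points give genuinely different maps, and that a single such root is not rational).

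**Ruling out $B(\mathbb{Q})$.** The heart of the proof is showing $f\notin B(\mathbb{Q})$, for which I would invoke Lemma~\ref{lem:periodic}: it suffices to show that $f$ has no $\mathbb{Q}$-rational fixed point and does not have two $\mathbb{Q}$-rational period-$2$ points forming a $2$-cycle. The periodic points of $f$ of period $1$ or $2$ are exactly the $x$-coordinates of the fixed points of $\Psi^{\circ 2}=[d^2]+(\text{translation by }S_2(d)Q)=[d^2]$, i.e. the points $P$ with $(d^2-1)P=O$, so the $(d^2-1)$-torsion of $E$; but I also need to track which of these descend to period-$1$ versus period-$2$ points of $f$ and whether the $x$-coordinate is rational. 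A point $P$ of $E$ with $x(P)\in\mathbb{Q}$ and $(d^2-1)P=O$: if $x(P)\in\mathbb{Q}$ then either $P\in E(\mathbb{Q})$ or $P$ is defined over a quadratic field and $\sigma(P)=-P$ for the nontrivial $\sigma$. In the latter case $P$ is a $\mathbb{Q}$-rational point on $\P_1$ fixed by $f$ only if the translation part acts trivially after projection, which forces $2Q\in\{O\}$ consistently — I would compute exactly when $\Psi(P)=\pm P$ and deduce a divisibility constraint analogous to the $2(k+1)<S_n(k)$ obstruction in Proposition~\ref{prop:family}.

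**Main obstacle.** The delicate point, and the one I expect to cost the most work, is the analysis of period-$2$ points: I must show that there do \emph{not} exist two distinct $\mathbb{Q}$-rational points on $\P_1$ that are swapped by $f$. Downstairs a $2$-cycle $\{t_0,t_1\}$ of $f$ lifts to points $P_0,P_1$ on $E$ with $\Psi(P_0)=\pm P_1$, $\Psi(P_1)=\pm P_0$, so $\Psi^{\circ 2}(P_0)=\pm P_0$, i.e. $d^2P_0=\pm P_0$, giving $(d^2\mp 1)P_0=O$; combined with $\Psi(P_0)=dP_0+Q=\pm P_1$ and rationality of $x(P_0),x(P_1)$, I want to force $P_0=\pm P_1$ (so the ``$2$-cycle'' is degenerate) or a contradiction with the irreducibility of $p$. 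The argument should reduce, as in the $S^1$ case, to: the only candidate $\mathbb{Q}$-rational periodic point of period $\le 2$ is a single point (the image of a $2$-torsion point or of $O$ under $x$), and then one checks directly that this point is \emph{not} fixed by $f$ because the translation-by-$Q$ part moves it — precisely the computation $f(x(Q'))\ne x(Q')$ for the relevant $2$-torsion or trivial point $Q'$, which is where the hypothesis that $d$ is odd and $Q$ is genuinely $2$-torsion (not $O$) enters. Once that single point is excluded, Lemma~\ref{lem:periodic} closes the argument and gives $f\in A_2(\mathbb{Q})\setminus(\mathbb{Q}(t)\cup B(\mathbb{Q}))$.
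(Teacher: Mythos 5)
You have the right skeleton: $f\in A_2(\mathbb{Q})$ because $2\mid S_2(d)=d+1$ (so $\Psi^{\circ 2}=[d^2]$ descends to $\Phi_{d^2}$, defined over $\mathbb{Q}$); $f\notin\mathbb{Q}(t)$ because $x(Q)$ is a root of the irreducible cubic $p$; and $f\notin B(\mathbb{Q})$ should follow from Lemma~\ref{lem:periodic}. But your plan for the ``main obstacle'' heads in the wrong direction. You propose porting the Proposition~\ref{prop:family} argument too literally: derive a divisibility obstruction analogous to $2(k+1)<S_n(k)$, reduce to a single surviving candidate point, then check directly that $f$ moves it. That template does not fit the elliptic case and is not needed.

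The clean argument, which uses the irreducibility of $p$ much more forcefully than you do, is a pure field-degree count. Suppose $x_0\in\mathbb{Q}$ is a fixed point of $f$ and lift it to $P\in E$ with $x(P)=x_0$. Since $y(P)^2=p(x_0)\in\mathbb{Q}$, the point $P$ is defined over an extension of $\mathbb{Q}$ of degree at most $2$. The fixed-point condition upstairs reads $dP+Q=\pm P$, so $Q=(\pm 1-d)P$ is an integer multiple of $P$ and hence also defined over $\mathbb{Q}(P)$. But $[\mathbb{Q}(Q):\mathbb{Q}]=3$ because $x(Q)$ is a root of the irreducible cubic $p$, and $3\nmid 2$ — contradiction. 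So there are \emph{no} $\mathbb{Q}$-rational fixed points, not ``one candidate to exclude by hand.'' The same degree count disposes of $2$-cycles: if $x_0,x_1\in\mathbb{Q}$ lift to $P$ and $\pm dP+Q$, both are defined over extensions of degree at most $2$, so $Q=(\pm dP+Q)\mp [d]P$ lies in their compositum, of degree dividing $4$; again $3\nmid 4$. You do gesture at ``a contradiction with the irreducibility of $p$'' as a fallback, but you never isolate this as the actual mechanism, and the divisibility-plus-direct-check plan you foreground would not carry the proof. Also note that the hypotheses ``$d$ odd'' and ``$Q\neq O$'' are consumed earlier (for $f\in A_2(\mathbb{Q})$ and $f\notin\mathbb{Q}(t)$ respectively), not in the $B(\mathbb{Q})$ step where you invoke them.
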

\begin{proof}
    Clearly $2\mid S_2(d)=d+1$, so as argued above with $n=2$ we have that $f\in A_2(\Q)$. The function $f$ is not in $\Q(t)$ because $Q$ is not defined over $\Q$. To show that $f\not\in B(\Q)$ we use again Lemma~\ref{lem:periodic}.
    Let $x_0\in\Q$ be a fixed point of the map $f$. Then $x_0$ is the $x$-coordinate of a point $P$ such that $dP+Q=\pm P$ on $E$. This is impossible because $Q=(d \pm 1)P$ is defined on an extension of degree 3 of $\Q$, while $P$ on an extension of degree at most 2. Therefore the map $f$ does not have any rational fixed points.

    Now let $x_0,x_1\in\Q$ be two 2-periodic points for $f$ which form a 2-cycle. As above, $x_0$ is the $x$-coordinate of a point $P$ and $x_1$ the $x$-coordinate of one of the points $\pm d P+Q$. But this is again impossible, because $P$ and $\pm dP+Q$ are defined over extensions of degree at most 2, while $Q=(dP+Q)-[d]P$ has degree 3 over $\Q$.

    By Lemma~\ref{lem:periodic} this shows that $f\not\in B(\Q)$.
\end{proof}

\begin{rem}
    Notice that the function constructed in this way is not one of the functions considered in Proposition~\ref{prop:family}. If it were we would have $n=2,k=d^2$ in Proposition~\ref{prop:family}, and then the field of definition would be $\Q(\sin\frac{\pi}{d^2+1},\cos\frac{\pi}{d^2+1})$, which has degree over $\Q$ greater than 3 for $d\geq 3$.
\end{rem}

We show here an explicit example of this construction. For simplicity we take a curve with one rational 2-torsion point, so that the example can be defined over a field of degree 2 rather than 3.

\begin{exmp}
Let $E$ be the elliptic curve defined by the Weierstrass equation
\[y^2 = x^3-2x,\]
and let $Q_0 = (\sqrt{2},0)\in E(\Q)[2]$ be the chosen $2$-torsion point.
Then the map $\Psi(P) = 3P+Q$ descends to a rational function $f$ of the form 
\tiny
\[
    f(x) = 
    \frac{\sqrt{2} x^9+18 x^8+24 \sqrt{2} x^7-144 x^6+120 \sqrt{2} x^5+240 x^4-288 \sqrt{2} x^3+192 x^2+144 \sqrt{2} x+32}
    {x^9-9 \sqrt{2} x^8+24 x^7+72 \sqrt{2} x^6+120 x^5-120 \sqrt{2} x^4-288 x^3-96 \sqrt{2} x^2+144 x-16 \sqrt{2}}.
\]
\normalsize
Taking the second iterate, we get:
 \[
  f^{\circ 2}(x) = 
 \frac{x^{81}+2160 x^{79}+1104624 x^{77}+\ldots+89060441849856 x}
 {81 x^{80}-37584 x^{78}+2048112 +\ldots+1099511627776}.
 \]
It is possible to compute that $0$ and $\infty$ are the only two $\Q$-rational periodic points of period $\le 2$. They are part of two distinct cycles of length 2: $\{0,-\sqrt{2}\}$ and $\{\infty,\sqrt{2}\}$. Therefore by Lemma~\ref{lem:periodic} we conclude $f\notin B(\Q)$.
\end{exmp}

\section{Fractional linear transformations}\label{sec:PGL}
In this section we study the automorphisms of $\P_1$ which are in $A(K)$. We will identify them with matrices in $\PGL_2(K)$.

As in the previous sections, $K$ will be a fixed number field. For an element $A\in\GL_2(K)$ we denote by $[A]$ its equivalence class in $\PGL_2(K)$.

\begin{exmp}
Fix a diagonalizable matrix $B\in\GL_2(K)$, so that
\[
B=MDM^{-1}                                                                              
\]
for some matrices $M,D\in\GL_2(K')$, with $D$ diagonal and $K'$ some extension of $K$ with $[K':K]\leq 2$.

For any $n>1$ we can find a diagonal matrix $\Delta\in\GL_2(F)$ such that $\Delta^n=D$, where $F$ is an extension of $K'$ of degree at most $n^2$, obtained by extracting $n$-th roots of the two elements on the diagonal of $D$.

We can now define $A=M\Delta M^{-1}\in \GL_2(F)$ and we have that $A^n=B\in\GL_2(K)$. This provides many examples of matrices with coefficients in a number field such that a power is defined over a smaller number field.
\end{exmp}

We will show that these examples exhaust all instances of automorphisms of $\P_1$ defined over a number field, having an iterate defined over a smaller number field.

\begin{rem}
 If a matrix $A$ defined over $F$ has a power defined over a proper subfield then $\det (A)$ has a power in a proper subfield of $F$.
\end{rem}

\begin{prop}\label{prop:matricesI}
Let $A\in\GL_2(\Qbar)$ be a matrix, not diagonalizable over $\Qbar$. Assume that $[A^n]\in\PGL_2(K)$ for some positive integer $n$. Then $[A]\in\PGL_2(K)$.
\end{prop}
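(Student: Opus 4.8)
The plan is to exploit the Jordan structure of a non-diagonalizable matrix in $\GL_2(\Qbar)$. Such a matrix has a single (repeated) eigenvalue $\lambda\in\Qbar^*$, and is conjugate over $\Qbar$ to the Jordan block $\lambda\begin{pmatrix}1&1\\0&1\end{pmatrix}$. I would first normalize: replacing $A$ by $\mu^{-1}A$ for a suitable scalar $\mu$ does not change $[A]$ or $[A^n]$, so I may assume $A$ itself, as a matrix, is conjugate (over $\Qbar$) to $N=\begin{pmatrix}1&1\\0&1\end{pmatrix}$; concretely $A=\lambda P N P^{-1}$ for some $P\in\GL_2(\Qbar)$ and $\lambda\in\Qbar^*$. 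Then $A^n=\lambda^n P N^n P^{-1}$, and since $N^n=\begin{pmatrix}1&n\\0&1\end{pmatrix}$ is still a nontrivial unipotent, $A^n$ is also non-diagonalizable with repeated eigenvalue $\lambda^n$.

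Next I would use the hypothesis $[A^n]\in\PGL_2(K)$. This says there is a representative $B\in\GL_2(K)$ of $[A^n]$, i.e. $A^n=c\,B$ for some scalar $c\in\Qbar^*$. Since $B$ has entries in $K$, its characteristic polynomial lies in $K[x]$; but $B=c^{-1}A^n$ has the single eigenvalue $c^{-1}\lambda^n$ with multiplicity two, so its characteristic polynomial is $(x-c^{-1}\lambda^n)^2\in K[x]$, forcing $c^{-1}\lambda^n\in K$. Thus after scaling $B$ by an element of $K^*$ I may assume $B$ is a unipotent matrix in $\GL_2(K)$ that is not the identity (it is not, since $A^n$ is non-diagonalizable). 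The key structural fact is then that a non-identity unipotent matrix in $\GL_2$ has a well-defined fixed line: $B$ has a unique eigenline $L=\ker(B-I)$, and because $B\in\GL_2(K)$ this line is Galois-stable, hence defined over $K$. The same applies to $A$: it too is non-diagonalizable, so has a unique fixed line $L'=\ker(A-\lambda I)$. But $A$ and $B$ commute (since $B$ is a scalar times a power of $A$), so $A$ preserves $L$; uniqueness of the fixed line of a nontrivial unipotent, applied to $B$, together with the fact that $A$ stabilizes $L$, forces $L'=L$. Therefore $A$ fixes the $K$-rational line $L$.

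Finally I would change coordinates by an element $g\in\PGL_2(K)$ taking $L$ to the standard line $[1:0]$ — this is possible precisely because $L$ is defined over $K$ — and replace $A$ by $g^{-1}Ag$, $B$ by $g^{-1}Bg$, which changes nothing about membership in $\PGL_2(K)$. Now $A$ is upper triangular, say $A=\begin{pmatrix}\lambda&\ast\\0&\lambda\end{pmatrix}$ (equal eigenvalues, and the $(2,1)$ entry vanishes since it fixes the top line), with nonzero $(1,2)$ entry since $A$ is not diagonal. Scaling by $\lambda^{-1}$, $[A]=\left[\begin{pmatrix}1&b\\0&1\end{pmatrix}\right]$ for some $b\in\Qbar^*$. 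Its $n$-th power represents $[A^n]=\left[\begin{pmatrix}1&nb\\0&1\end{pmatrix}\right]$, and this equals $[B]$ with $B\in\GL_2(K)$ unipotent upper triangular, hence $B=\begin{pmatrix}1&\beta\\0&1\end{pmatrix}$ with $\beta\in K$ (the $(1,2)$ entry of a matrix in $\GL_2(K)$), and projectivizing forces $nb$ to be a $K^*$-multiple of $\beta$, i.e. $nb\in K$, so $b\in K$. Hence $[A]=\left[\begin{pmatrix}1&b\\0&1\end{pmatrix}\right]\in\PGL_2(K)$, and undoing the conjugation by $g\in\PGL_2(K)$ gives $[A]\in\PGL_2(K)$ as claimed.

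I expect the main subtlety to be the bookkeeping around scalars: carefully tracking that $[A]$ and $[A^n]$ are unchanged under the successive reductions (scaling $A$, conjugating by $g\in\PGL_2(K)$), and pinning down that the repeated eigenvalue of the $K$-rational representative can be scaled into $K$ so that one genuinely reduces to the unipotent case. The geometric heart — "a nontrivial unipotent in $\GL_2(K)$ fixes a unique line, which is therefore $K$-rational, and a commuting non-diagonalizable matrix must share that line" — is the real content and is clean once the scalars are handled.
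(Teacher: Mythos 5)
Your argument is correct, and it takes a genuinely different route from the paper's. The paper works coordinate by coordinate: after writing $[A]=[N]\bigl[\begin{smallmatrix}1&1\\0&1\end{smallmatrix}\bigr][N]^{-1}$ with $N=\bigl(\begin{smallmatrix}a&b\\c&d\end{smallmatrix}\bigr)$, it computes the four entries of $[A^n]$ explicitly, extracts the $K$-rationality conditions (which are linear in $1/n$), and observes that they force $a/c$ and $(ad-bc)/c^2$ to lie in $K$, hence the same conditions hold for $n=1$. Your proof is instead structural: you use the characteristic polynomial of a $K$-rational representative $B$ of $[A^n]$ to pin down the scalar (so that $B$ can be taken unipotent in $\GL_2(K)$), then use the unique eigenline $\ker(B-I)$, which is automatically defined over $K$, and the fact that $A$ commutes with $B$ to conclude that $A$'s unique eigenline coincides with this $K$-rational line; conjugating by an element of $\GL_2(K)$ that moves this line to $[1:0]$ makes both $A$ and $B$ upper triangular unipotent (after scaling), at which point the equality $[A^n]=[B]$ literally reads $nb=\beta\in K$. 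Both proofs are complete; yours has the advantage of isolating the geometric content (the $K$-rational fixed line of a nontrivial unipotent, shared by any commuting non-diagonalizable matrix) from the scalar bookkeeping, and would generalize more readily, while the paper's explicit computation is shorter to write down and requires no preliminary normalization of $B$.
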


\begin{proof}
  Let $\lambda\ne 0$ be the only eigenvalue of $A$. Then there is a matrix $M\in\GL_2(\Qbar)$ such that
 \[
  A=M\begin{pmatrix}\lambda & 1 \\ 0 & \lambda \end{pmatrix} M^{-1}
 \]
Notice that $\begin{pmatrix}\lambda & 1 \\ 0 & \lambda \end{pmatrix}=\begin{pmatrix} 1/\lambda & 0 \\ 0 & 1 \end{pmatrix}\begin{pmatrix}\lambda & \lambda \\ 0 & \lambda \end{pmatrix}\begin{pmatrix}1/\lambda & 0 \\ 0 & 1 \end{pmatrix}^{-1}$; writing $N=M\begin{pmatrix} 1/\lambda & 0 \\ 0 & 1 \end{pmatrix}$ then we have
\[
 [A]=[N]\begin{bmatrix} 1 & 1 \\ 0 & 1 \end{bmatrix}[N]^{-1}
\]
with $N=\begin{pmatrix} a & b \\ c & d \end{pmatrix}\in\GL_2(\Qbar)$.
 
Noticing that $[N]^{-1}=\begin{bmatrix} d & -b \\ -c & a \end{bmatrix}$ and that  $\begin{pmatrix} 1 & 1 \\ 0 & 1 \end{pmatrix}^n=\begin{pmatrix} 1 & n \\ 0 & 1 \end{pmatrix}$ for every integer $n$, we have that
 \[
  [A^n]=\begin{bmatrix} 
         ad-bc -acn & a^2 n \\
         -c^2 n & ad-bc+acn
        \end{bmatrix}.
 \]
One of $a,c$ must be different from 0; assume $c\neq 0$ (the other case being analogous) and $n\neq 0$.
Then $[A^n]$ is defined over $K$ if and only if the following three conditions are satisfied
\begin{equation}\left\{\begin{aligned}\label{matrix.nondiagon.conditions}
 \frac{1}{n}\left(\frac{ad-bc}{c^2}\right)-\frac{a}{c} &\in K\\
 \left(\frac{a}{c}\right)^2 &\in K\\
 \frac{1}{n}\left(\frac{ad-bc}{c^2}\right)+\frac{a}{c} &\in K.
\end{aligned}\right.\end{equation}
Therefore by taking the difference of the first and the third conditions we see that $a/c\in K$ and thus also $(ad-bc)/c^2\in K$.
This implies that conditions \eqref{matrix.nondiagon.conditions} are also satisfied for $n=1$, and thus $[A]$ itself is defined over $K$. 
\end{proof}

\begin{prop}\label{prop:matricesII}
 Let $A\in\GL_2(\Qbar)$ be a matrix diagonalizable in $\Qbar$; let $\lambda_1,\lambda_2$ be its eigenvalues. Let $n$ be a positive integer such that $[A^n]\in\PGL_2(K)$. Then for $K'=K((\lambda_1/\lambda_2)^n)$ we have $[K':K]\leq 2$.
\end{prop}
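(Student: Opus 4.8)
The plan is to read the eigenvalue ratio of $A^n$ off a $K$-rational representative of $[A^n]$ and to bound its field of definition by a discriminant. Since $[A^n]\in\PGL_2(K)$, I would choose a matrix $B\in\GL_2(K)$ with $[B]=[A^n]$; by the definition of $\PGL_2$ this forces $B=cA^n$ for some nonzero scalar $c\in\Qbar$. Writing $A=MDM^{-1}$ with $D=\operatorname{diag}(\lambda_1,\lambda_2)$ (possible since $A$ is diagonalizable over $\Qbar$), we get $B=M\operatorname{diag}(c\lambda_1^n,c\lambda_2^n)M^{-1}$, so the eigenvalues of $B$ are $c\lambda_1^n$ and $c\lambda_2^n$.

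Next I would exploit that $B\in\GL_2(K)$, so that its characteristic polynomial $\chi_B(x)=x^2-\operatorname{tr}(B)\,x+\det(B)$ lies in $K[x]$. Its two roots $c\lambda_1^n,c\lambda_2^n$ therefore generate over $K$ the splitting field $K(\sqrt{\Delta})$ of this quadratic, where $\Delta=\operatorname{tr}(B)^2-4\det(B)$; in particular $[K(\sqrt{\Delta}):K]\le 2$. The scalar $c$ cancels in the ratio of the two roots, which is exactly $(\lambda_1/\lambda_2)^n$ (or its reciprocal, according to which root is labelled which). Hence $(\lambda_1/\lambda_2)^n\in K(\sqrt{\Delta})$, so $K'=K((\lambda_1/\lambda_2)^n)\subseteq K(\sqrt{\Delta})$ and $[K':K]\le 2$.

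Only the degenerate case $\lambda_1^n=\lambda_2^n$ needs separate (trivial) treatment: then $(\lambda_1/\lambda_2)^n=1$, so $K'=K$ and there is nothing to prove. I do not anticipate a real obstacle here; the conceptual content is merely that the only $\PGL_2$-invariant of a diagonalizable class is the eigenvalue ratio up to inversion, and that a $2\times 2$ matrix over $K$ has its eigenvalue ratio in a quadratic extension of $K$. The one point that requires a little care is the passage from the $\PGL_2(K)$-class to an honest matrix $B=cA^n$ over $K$, and checking that $c$ disappears from the ratio so that what remains is precisely $(\lambda_1/\lambda_2)^n$ rather than some twisted version of it.
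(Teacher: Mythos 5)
Your proof is correct and takes a genuinely different, cleaner route than the paper's. The paper writes $[A^n]=[M]\left[\begin{smallmatrix}\alpha^n&0\\0&1\end{smallmatrix}\right][M]^{-1}$ explicitly in terms of the entries $a,b,c,d$ of $M$, imposes the three conditions \eqref{eq:A^n-defined-K} for $K$-rationality, and then grinds through several cases ($cd\neq0$ versus $ab\neq0$ versus $a=d=0$ or $b=c=0$, and within those, whether $\frac{a}{c}+\frac{b}{d}$ vanishes) to solve for $\alpha^n$ in terms of $K$-rational quantities. Your argument avoids all of that by going directly to the invariant: pick $B\in\GL_2(K)$ with $[B]=[A^n]$, note $B=cA^n$ so the eigenvalues of $B$ are $c\lambda_1^n,c\lambda_2^n$, observe that these lie in the splitting field $K(\sqrt{\Delta})$ of $\chi_B\in K[x]$, a degree-$\leq 2$ extension, and then cancel $c$ in the ratio. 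The conceptual content you identify — that the eigenvalue ratio (up to inversion) is the only $\PGL_2$-invariant of the diagonalizable class, and that it is controlled by the discriminant of the characteristic polynomial of any $K$-rational representative — is exactly right, and it packages the computation in a case-free way. You don't even really need to single out the degenerate case $\lambda_1^n=\lambda_2^n$: there $\Delta=0$, $K(\sqrt{\Delta})=K$, and the argument goes through unchanged. The one small point worth making explicit is that the labelling ambiguity is harmless because $K((\lambda_1/\lambda_2)^n)=K((\lambda_2/\lambda_1)^n)$, as the two generators are reciprocals; you noted this in passing. What the paper's computational route buys, if anything, is a set of explicit formulas for $\alpha^n$ in terms of the entries of $M$ that are re-used implicitly in the proof of Proposition~\ref{prop:matricesIII}; but your proof suffices for the statement as given.
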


\begin{proof}
 Write $\alpha=\lambda_1/\lambda_2$. From the hypothesis,
 \begin{align*}
  [A]&=[M]\begin{bmatrix} \alpha & 0 \\ 0 & 1\end{bmatrix} [M]^{-1} \\
  [A^n]&=[M]\begin{bmatrix} \alpha^n & 0 \\ 0 & 1\end{bmatrix} [M]^{-1}
 \end{align*}
for some matrix $M=\begin{pmatrix} a & b \\ c & d \end{pmatrix}\in\GL_2(\Qbar)$. We have $[M]^{-1}=\begin{bmatrix} d & -b \\ -c & a \end{bmatrix}$ so that
\[
 [A^n]=\begin{bmatrix} \alpha^n ad -bc & ab(1-\alpha^n) \\ -cd (1-\alpha^n) & ad -\alpha^n bc \end{bmatrix}.
\]
If $\alpha^n=1$ the statement is true as $K'=K$, so we assume now that $\alpha^n\neq 1$; assume for the moment that $cd\neq 0$.
Then $[A^n]$ is defined over $K$ if and only if the following three conditions are satisfied
\begin{equation} \label{eq:A^n-defined-K}
\left\{
\begin{aligned}
 \frac{a}{c}\frac{\alpha^n}{1-\alpha^n}-\frac{b}{d}\frac{1}{1-\alpha^n}=u_1 &\in K\\
 \frac{ab}{cd} =u_2 &\in K\\
 -\frac{b}{d}\frac{\alpha^n}{1-\alpha^n}+\frac{a}{c}\frac{1}{1-\alpha^n} = u_3 &\in K.
\end{aligned}
\right.
\end{equation}
Taking the difference of the first and the third conditions, one gets that
\[
 \frac{a}{c}+\frac{b}{d}=u_3-u_1 \in K,
\]
which, combined with the second one gives that $a/c , b/d $ lie in an extension $K'$ of $K$ of degree at most 2.
Then, if $\frac{a}{c}+\frac{b}{d}\neq 0$, one can solve the first and the third equations finding
\begin{align*}
\left\{
\begin{aligned}    
 \frac{\alpha^n}{1-\alpha^n} & = \left(u_1 \frac{a}{c} + u_3 \frac{b}{d}\right)/\left(\frac{a^2}{c^2} - \frac{b^2}{d^2}\right) \\
 \frac{1}{1-\alpha^n} & = \left(u_3 \frac{a}{c} + u_1 \frac{b}{d}\right)/\left(\frac{a^2}{c^2} - \frac{b^2}{d^2}\right), 
 \end{aligned}
\right.
\end{align*}
so that
\[
 \alpha^n  = \frac{u_1 \frac{a}{c} + u_3 \frac{b}{d}}{u_3 \frac{a}{c} + u_1 \frac{b}{d}}\in K',
\]
which proves the statement.
On the other hand, if $\frac{a}{c}+\frac{b}{d}= 0$ then solving the first equation in \eqref{eq:A^n-defined-K} for $\alpha^n$ gives us 
\[
 \alpha^n=\frac{\frac{c}{a}u_{1} - 1}{\frac{c}{a}u_{1} + 1} \in K'.
\]
This completes the proof in the case that $cd\neq 0$. If $ab\neq 0$ the argument is analogous. The only cases left are if $a=d=0$ or $b=c=0$, but in these cases $[A^n]$ is equal to $\begin{bmatrix}-bc & 0 \\ 0 & -bc \alpha^n \end{bmatrix}$ or $\begin{bmatrix}ad\alpha^n & 0 \\ 0 & ad  \end{bmatrix}$ respectively, and in either case the statement follows immediately.
\end{proof}

\begin{prop}\label{prop:matricesIII}
Let $A\in \GL_2(\Qbar)$ be a matrix and $n\geq 1$ an integer such that $[A^n]\in\PGL_2(K)$ but $[A]$ is not defined over $K$.
Then there exist extensions $K\subseteq K'\subseteq F$ where $[K':K]\leq 2$ and $[F:K']\leq n$, a diagonal matrix $D\in\GL_2(\Qbar)$ and an invertible matrix $M\in\GL_2(K')$ such that $A=MDM^{-1}$,
 $[D^n]\in \PGL_2(K')$ and 
  $[D] \in \PGL_2(F)$.
\end{prop}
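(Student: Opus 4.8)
The plan is to split into the two cases already treated in Propositions~\ref{prop:matricesI} and~\ref{prop:matricesII}, according to whether $A$ is diagonalizable over $\Qbar$ or not. In the non-diagonalizable case, Proposition~\ref{prop:matricesI} tells us that $[A^n]\in\PGL_2(K)$ forces $[A]\in\PGL_2(K)$, contradicting the hypothesis that $[A]$ is not defined over $K$; so this case is vacuous and there is nothing to prove. Hence we may assume $A$ is diagonalizable over $\Qbar$, with eigenvalues $\lambda_1,\lambda_2$, and we set $\alpha=\lambda_1/\lambda_2$.

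Next I would produce the field $K'$. By Proposition~\ref{prop:matricesII}, $K':=K(\alpha^n)$ satisfies $[K':K]\le 2$. The key point is that the matrix $M=\begin{pmatrix} a & b \\ c & d\end{pmatrix}$ diagonalizing $A$ can be chosen with entries in $K'$: inspecting the proof of Proposition~\ref{prop:matricesII}, the conditions \eqref{eq:A^n-defined-K} show that (after scaling $M$ suitably, e.g.\ normalizing one nonzero entry to $1$) the ratios $a/c$ and $b/d$ — equivalently all the projective data of the eigenvectors of $A$ — lie in $K'$. I would make this explicit: since $[A^n]\in\PGL_2(K)\subseteq\PGL_2(K')$ and the eigenvalues of a representative of $[A^n]$ lie in $K'$ (because $\alpha^n\in K'$), the eigenlines of $A^n$, which coincide with the eigenlines of $A$, are Galois-stable over $K'$ and therefore defined over $K'$; hence $M$ can be taken in $\GL_2(K')$. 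Then $D:=M^{-1}AM$ is diagonal, $A=MDM^{-1}$ with $M\in\GL_2(K')$, and $[D^n]=[M]^{-1}[A^n][M]\in\PGL_2(K')$ since both $[A^n]$ and $[M]$ are defined over $K'$.

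Finally I would build $F$. Write $D=\operatorname{diag}(\mu_1,\mu_2)$; then $[D^n]\in\PGL_2(K')$ says exactly that $(\mu_1/\mu_2)^n=\alpha^n\in K'$, which we already know. Set $F:=K'(\mu_1/\mu_2)=K'(\alpha)$. Since $\alpha$ is a root of $X^n-\alpha^n$ over $K'$, we get $[F:K']\le n$, and over $F$ the matrix $\operatorname{diag}(\alpha,1)$ — a representative of $[D]$ up to the scalar $\mu_2$ — has entries in $F$, so $[D]\in\PGL_2(F)$. This yields the tower $K\subseteq K'\subseteq F$ with the stated degree bounds and all three conclusions $A=MDM^{-1}$, $[D^n]\in\PGL_2(K')$, $[D]\in\PGL_2(F)$.

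The main obstacle I anticipate is the step asserting that $M$ can be chosen over $K'$ rather than merely over $\Qbar$: one must argue carefully that the eigenspace decomposition of $A$ descends to $K'$. The cleanest route is via Galois descent — show each eigenline of $A$ is fixed by $\gal(\Qbar/K')$ — but one has to handle the case $\alpha^n=1$ (where $A^n$ is scalar and gives no information about eigenlines) separately; in that subcase, however, $A$ itself is diagonalizable with $\alpha$ an $n$-th root of unity, $K'=K$, and one takes $F=K(\alpha)$ directly, with $M$ absorbed appropriately, so the conclusion still holds. The remaining entry-location cases ($cd=0$ or $ab=0$) are exactly the degenerate cases already disposed of at the end of the proof of Proposition~\ref{prop:matricesII} and are handled the same way.
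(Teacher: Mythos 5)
Your architecture matches the paper's: dispose of the non-diagonalizable case via Proposition~\ref{prop:matricesI}, set $K'=K(\alpha^n)$ using Proposition~\ref{prop:matricesII}, descend the diagonalizing matrix to $K'$, and take $F=K'(\alpha)$. The genuine gap is in the descent step, and it sits exactly where the paper is also doing delicate work. You argue that the eigenlines of $A$ (equal to those of a $K$-representative $B$ of $[A^n]$, once $A^n$ is not scalar) are stable under $\gal(\Qbar/K')$ because $\alpha^n\in K'$. But stability of each eigenline \emph{individually} requires that no automorphism fixing $K'$ \emph{swaps} the two lines, i.e.\ swaps the eigenvalues $\mu_1,\mu_2$ of $B$. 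From $\operatorname{tr}B\in K$ and $\mu_1/\mu_2=\alpha^n$ one gets $\mu_1\in K'$ whenever $1+\alpha^n\neq0$, and then your descent is fine. When $\alpha^n=-1$, however, $K'=K$, the eigenvalues of $B$ are $\pm\mu$ with $\mu^2\in K$ and possibly $\mu\notin K$, and the automorphism $\mu\mapsto-\mu$ fixes $K'$ while exchanging the eigenlines; they need not be defined over $K(\alpha^n)$. Concretely, for $A=\begin{pmatrix}\sqrt{2}\,i & 2\\ 1 & \sqrt{2}\,i\end{pmatrix}$ over $K=\Q$ with $n=2$, one has $[A^2]=\begin{bmatrix}0&2\\1&0\end{bmatrix}\in\PGL_2(\Q)$ and $\alpha^2=-1$, so $K(\alpha^2)=\Q$, yet the eigenlines of $A$ have slopes $\pm1/\sqrt2$. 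Here one must enlarge $K'$ to the splitting field of the characteristic polynomial of $B$ (still quadratic over $K$); the paper's case $s=-1$, with the anti-diagonal matrix $R$ that exchanges the two diagonal entries, is precisely what compensates for this swap, and your sketch has no analogue of it.

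A second issue: you flag the subcase $\alpha^n=1$ but then dismiss it with ``$M$ absorbed appropriately,'' which is not an argument. If $A^n$ is scalar you lose all control over the eigenlines of $A$, which can be defined only over an extension of $K$ of arbitrarily large degree, so no $M\in\GL_2(K')$ with $[K':K]\le 2$ will diagonalize $A$ in general. This subcase needs to be either excluded by hypothesis or handled by a genuinely different argument; note that the paper's own proof also tacitly assumes $D^n$ has distinct diagonal entries (this is used when it concludes that $M^{-1}N$ must be diagonal), so you should not take the paper's silence as evidence that this case is trivial.
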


\begin{proof}
By Proposition~\ref{prop:matricesI} the matrix $A$ is diagonalizable over $\Qbar$.
Let $\lambda_1,\lambda_2$ be the two  eigenvalues of $A$; they must be distinct, otherwise $[A]=\left[\begin{pmatrix}1 & 0 \\ 0 & 1\end{pmatrix}\right]$ would be defined over $\mathbb{Q}$. Let $D=\begin{pmatrix}\lambda_1 & 0 \\ 0 & \lambda_2\end{pmatrix}$; there is a matrix $N\in\GL_2(\Qbar)$ such that $A=NDN^{-1}$.

Let $K'=K((\lambda_1/\lambda_2)^n)$ and $F=K'(\lambda_1/\lambda_2)$. Clearly $[F:K']\leq n$, $[D^n]\in \PGL_2(K')$, $[D]\in \PGL_2(F)$ and by Proposition~\ref{prop:matricesII} we know $[K':K]\leq 2$.

Therefore 
\[
[A^n]=[M D^n M^{-1}]
\]
for a matrix $M\in\GL_2(K')$.
Then we have that
\begin{align*}
\left\{
\begin{aligned}
    A^n &= N D^n N^{-1},\\
    A^n &= s M D^n M^{-1},
\end{aligned}
\right.
\end{align*}
for a scalar $s\ne 0$.
The matrices $D^n$ and $sD^n$ are similar, therefore they have the same eigenvalues, so that $s=\pm 1$.
If $s=1$ then we see that $M^{-1}N$ commutes with $D^n$, which is a diagonal matrix with distinct entries; therefore $N=MR$ for some diagonal matrix $R$, and $A=NDN^{-1}=MRDR^{-1}M^{-1}=MDM^{-1}$ as $R$ commutes with $D$.

If $s=-1$ then we must have $\lambda_1^n=-\lambda_2^n$.
Similarly to the case discussed above, $M^{-1}N D^n (M^{-1}N)^{-1}=-D^n$ implies that $N=MR$ for an anti-diagonal matrix $R$. This gives
\[A=NDN^{-1}=MRDR^{-1}M^{-1}=MD'M^{-1}=M\begin{pmatrix}0&1\\1&0\end{pmatrix}D\begin{pmatrix}0&1\\1&0\end{pmatrix}M^{-1},\]
where $D'=\begin{pmatrix}\lambda_2 & 0 \\ 0 & \lambda_1\end{pmatrix}$. Thus we can take $M\begin{pmatrix}0&1\\1&0\end{pmatrix}$ for the $M$ in the statement.
\end{proof}

\bibliographystyle{amsalpha}
\bibliography{foi} 

\end{document}